\documentclass[preprint,sort&compress,12pt]{elsarticle}
\usepackage[pagewise]{lineno}
\usepackage{bbm}
\usepackage{amstext}
\usepackage{amsmath}
\usepackage{amssymb}
\usepackage{mathrsfs}
\usepackage{stmaryrd}
\usepackage{bm}
\usepackage{pstricks}
\usepackage{pst-coil}
\usepackage{pst-3d}
\usepackage{amsfonts}
\usepackage{amsthm}
\usepackage{CJK}
\allowdisplaybreaks

\makeatletter
\def\ps@pprintTitle{%
   \let\@oddhead\@empty
   \let\@evenhead\@empty
   \def\@oddfoot{\reset@font\hfil\thepage\hfil}%
   \let\@evenfoot\@oddfoot
}
\makeatother

\newcommand{\mf}{\mathbf}
\newcommand{\mm}{\mathrm}

\newcommand{\be}{\begin{equation}}
\newcommand{\bea}{\begin{equation}\begin{aligned}}
\newcommand{\beas}{\begin{equation*}\begin{aligned}}
\newcommand{\eeas}{\end{aligned}\end{equation*}}
\newcommand{\eea}{\end{aligned}\end{equation}}
\newcommand{\ee}{\end{equation}}

\begin{document}
\begin{CJK*}{GBK}{song}
	\begin{frontmatter}
		\title{On Global-in-time Solutions of Incompressible \\
MHD Equations with Small Alfv\'en Numbers} 
		\author[mmL1]{Fei Jiang}
		\ead{jiangfei0591@163.com}
\author[mmL12]{Xiao Ren}
\ead{xren@fudan.edu.cn}
		\author[Yuan]{Yi Zhou\corref{cor1}}
		\ead{pde_in_physics@163.com} 
\cortext[cor1]{Corresponding author.}
		\address[mmL1]{School of Mathematics and Statistics, Fuzhou University, Fuzhou, 350108, China.}
\address[mmL12]{Center for Applied Mathematics, Fudan University, Shanghai 200433, China.}
\address[Yuan]{School of Mathematical Sciences, Fudan University, Shanghai 200433, China.}		
\begin{abstract}
In 1965 Kraichnan pointed out that
 a {sufficiently} strong background magnetic field, i.e. the case of \emph{small Alfv\'en number},  will reduce the nonlinear interaction and inhibit the formation of strong gradients in the magnetohydrodynamic (abbr. MHD)  system with $\mu=\nu\geqslant 0$, where ${\mu}$ and $\nu $ are the  coefficients of kinematic viscosity and resistivity resp.. This means that the MHD system with ${\mu}=\nu\geqslant 0$ admits
global-in-time large perturbation solutions with small Alfv\'en numbers. The existence of such large perturbation solutions was first mathematically verified in H\"older spaces by Bardos--Sulem--Sulem for the case ${\mu}=\nu= 0$ in 1988, and in Sobolev spaces by Cai--Cui--Jiang--Liu for the case ${\mu}=\nu> 0$ recently. In this paper, we further found a similar result for the general case ``${\mu}>0$ and $\nu>0$", and provide a rigorous proof by developing a new approach, which includes a key bilinear estimate for dealing with the nonlinear interaction terms. Moreover both additional results for the vanishing behavior of the nonlinear interaction and the small Alfv\'en number limit of solutions  are also established.
\end{abstract}
\begin{keyword}
general initial data;  MHD equations; large perturbation solutions with small Alfv\'en numbers; small Alfv\'en number limits.
\end{keyword}
\end{frontmatter}
\newtheorem{thm}{Theorem}[section]
\newtheorem{lem}{Lemma}[section]
\newtheorem{pro}{Proposition}[section]
\newtheorem{cor}{Corollary}[section]
\newproof{pf}{Proof}
\newdefinition{rem}{Remark}[section]
\newtheorem{definition}{Definition}[section]
\newtheorem{concl}{Assertion}[section]
	
\section{Introduction}\label{sect1}
\numberwithin{equation}{section}
The basic motion equations describing the incompressible magnetohydrodynamic (abbr. MHD) flow in $\mathbb{R}^n$ with $n=2$, $3$ read as follows \cite{MR346289}
\begin{equation}\label{1.1}
\begin{cases}
\rho v_t+  \rho v\cdot\nabla v- \tilde{\mu}\Delta v+\nabla P =\lambda  M\cdot\nabla M/(4\pi) ,\\[1mm]
M_t+v\cdot\nabla M-\nu\Delta M=M\cdot\nabla v,\\[1mm]
\mathrm{div}v=\mathrm{div}{M}=0.
\end{cases}
\end{equation}
The unknown functions ${v}:= {v}(x,t)$, $M:= {M}(x,t)$ and $P:= P(x,t)$ denote the velocity, the magnetic field and the sum of kinetic  pressure and magnetic pressure, resp.. The nonnegative constants $\rho> 0$ and $\lambda$ are the density and the magnetic permeability, resp. In addition, $\tilde{\mu}\geqslant 0 $ and $\nu \geqslant 0$ represent the  coefficients of viscosity and resistivity, resp.. 

\subsection{Motivation}
Now we consider a rest state $(0,\bar{M})$ for the system \eqref{1.1}, and then define
\begin{align}
H:=\sqrt{\frac{\lambda}{4\pi\rho}}\left(M-\bar{M}\right),\ p:=\frac{P}{\rho},\  \bar{M}:= \varpi\mf{e}^{1},\ {\mu}:=\frac{\tilde{\mu}}{\rho}\mbox{ and }\varepsilon:=\sqrt{\frac{4\pi\rho}{\lambda \varpi^2}},
\label{2511040930}
\end{align}
 where the constant vector $\bar{M}\in\mathbb{R}^n$ is a background (or impressed) magnetic field,  $\mathbf{e}^1\in \mathbb{R}^n$ the unit vector with the first component being $1$,   and $\mu$, resp. $\varepsilon$ are called the kinematic viscosity coefficient resp. \emph{Alfv\'en number} in this paper. Thus we have the following perturbation system for $(v,H)$:
\begin{equation}\label{1sa.1}
\begin{cases}
v_t+  v\cdot\nabla v-  {\mu}\Delta v+\nabla p =(\varepsilon^{-1} \mathbf{e}^1+ H )\cdot \nabla H, \\[1mm]
H_t+v\cdot\nabla H-\nu\Delta H=(\varepsilon^{-1}  \mf{e}^{1} +H)\cdot\nabla v,\\[1mm]
\mathrm{div}v=\mathrm{div}H=0.
\end{cases}
\end{equation}
 The existence of the global(-in-time) strong solutions of \eqref{1sa.1} with small initial data has been widely investigated, see \cite{MR3780143,MR3738384,MR3678491,MR4072209} for the case $\mu=\nu=0$, \cite{MR3666563,MR3851055,MR3377532,MR3210038,MR3296601} for the case ``$\mu>0$ and $\nu=0$", and \cite{MR4372923,MR4340933,MR3843627} for the cases ``$\mu=0$ and $\nu>0$". We mention that the two-dimensional (abbr. 2D) system \eqref{1sa.1} with general initial data admits unique global strong solutions for the case ``$\mu>0$ and $\nu>0$" \cite{MR716200}. In addition, interested readers refer to \cite{MR716200,MR3590662,MR3217057,MR3682684,MR3415680} and the references cited therein for the local(-in-time) solutions of \eqref{1sa.1} with large initial data.
 
Kraichnan pointed out that, in the MHD system \eqref{1sa.1} with $\mu=\nu\geqslant 0$, a {sufficiently} strong  $\bar{M} $ (i.e. sufficiently small $\varepsilon$ by recalling the definitions of $\bar{M}$ and $\varepsilon$ in \eqref{2511040930}) will reduce the nonlinear interaction and inhibit the formation of strong gradients \cite{MR1927281}.
This effect was also observed in direct numerical simulations of the ideal MHD system (i.e. the system \eqref{1sa.1} with $\mu=\nu=0$) with periodic boundary conditions \cite{MR7201451}. In 1998, Bardos--Sulem--Sulem first mathematically proved that the ideal MHD system in the H\"older space is globally well-posed for large initial perturbations around the equilibrium state $(0,\bar{M})$ by exploiting the hyperbolicity structure, when 
 $\varepsilon$ is sufficiently small (i.e. the field strength $|\bar{M}|$ is sufficiently large) \cite{MR920153}. Later Zhang rewrote the 2D MHD system \eqref{1sa.1} as wave equations with dissipative terms for the case ``$\mu>0$ and $\nu=0$", and then established the existence result of large perturbation solutions with small $\varepsilon$ \cite{MR3448784} by using the oscillatory integral theory, also see Jiang--Jiang for the 3D case by exploiting magnetic inhibition theory in Lagrange coordinates \cite{MR4641656}.
Recently, Cai--Cui--Jiang--Liu further obtained a similar result for the system \eqref{1sa.1} with $\mu=\nu>0$ by using Alinhac's ghost weight technique \cite{cai2025small}. Thus the result of Cai et.al., together with the one of Bardos et.al. completes the mathematical proof of  Kraichnan's assertion for $\mu=\nu\geqslant 0$.

However, motivated by the existence results of large perturbation solutions with small $\varepsilon$ in \cite{MR3448784,MR4641656} for the case $\mu\neq \nu =0$, ones naturally conjecture that Kraichnan's assertion may be extended to the general cases $\mu$, $\nu\geqslant 0$, that is (roughly speaking): 
\begin{concl}\label{2510071030} Let $\mu$, $\nu\geqslant 0$ be given. If
 \begin{align}
 \label{2510071047}
\|(v^0,H^0)\|_X  \ll \varepsilon^{-s} 
 \end{align}
 for some $s>0$,
 then the Cauchy problem of the perturbation system \eqref{1sa.1} admits a global(-in-time) strong solution with initial data $(v^0,H^0)$.
 Here 
  $\|(v^0,H^0)\|_X$ represents some norm of $(v^0,H^0)$ and  $A\ll B$ means that $A$ is much smaller than $B$.
\end{concl} 

For reader's convenience, we \emph{roughly} summarize the previous works for Assertion \ref{2510071030} as follows. 
\begin{center}
\renewcommand{\arraystretch}{1.2} 
\begin{tabular}{|c|c|c|} 
\hline Values of $\mu$ and $\nu$   & Whether Assertion \ref{2510071030} holds?  &  References \\ 
   \hline
 $\mu=\nu=0 $ & Yes & \cite{MR920153,cai2025small}\\
   \hline
$\mu=\nu>0 $  & Yes & \cite{cai2025small}\\  \hline
 $\mu>0$, $\nu=0 $  & Yes & \cite{MR4641656,MR3448784}\\ \hline
\end{tabular} 
\end{center}
In this paper, we will further prove that Assertion \ref{2510071030} holds for the case ``$\mu>0$ and $\nu>0$" by developing a new proof framework, which is quite different from the ones in \cite{MR920153,cai2025small,MR4641656,MR3448784}. Moreover the global solutions also enjoy the vanishing behavior of the nonlinear interaction. It should be noted that our approach strongly depends on the viscosity and the resistivity, and thus can not be applied to the remaining case ``$\mu=0$ and $\nu>0$", which should be further deeply investigated. 
 Finally, it is interesting that the stability condition \eqref{2510071047} in Assertion \ref{2510071030} is very similar to the one of Couette flow in the MHD system \cite{MR4729862,MR4115008,MR4747502}.
 
\subsection{Reformulation}	
To conveniently investigate the large perturbation solutions with small $\varepsilon$, we shall first rewrite the system \eqref{1sa.1}.  
  Noting that the initial condition \eqref{2510071047} with $s=1$ is equivalent to 
$$ \varepsilon \|(v^0,H^0)\|_X  \ll 1,$$
thus we use the new variables
\begin{align}
\label{2511042225}
 u :=\varepsilon v(x,\varepsilon  t),\  h:=\varepsilon H(x,\varepsilon  t)\mbox{ and }q:= \varepsilon^2 p(x,\varepsilon  t) 
\end{align}
to rewrite the system \eqref{1sa.1} for $(u,h)$, and then obtain the following Cauchy problem 
\begin{equation}\label{1.4}
\begin{cases}
u_t+  u\cdot\nabla u- \varepsilon {\mu}\Delta u+\nabla q =( \mathbf{e}^1+ h )\cdot \nabla h, \\[1mm]
h_t+u\cdot\nabla h-\varepsilon\nu\Delta h=(\mf{e}^{1} +h)\cdot\nabla u,\\[1mm]
\mathrm{div}u=\mathrm{div}h=0,\\[1mm]
u|_{t=0}=\varepsilon v^0,\ h|_{t=0}=\varepsilon H^0.
\end{cases}
\end{equation}

In next section, we will present the existence result of global-in-time solutions of the above Cauchy problem enjoying a uniform energy estimate with  respect to $\varepsilon$ in Theorem \ref{thm1}, and then the error estimate with respect to $\varepsilon$ between the (nonlinear) solution of the Cauchy problem  \eqref{1.4} and the one of the corresponding linear (pressureless) Cauchy problem in Theorem \ref{thma2}. Thanks to Theorems \ref{thm1} and \ref{thma2},  we easily further record the existence result and the vanishing phenomenon of the nonlinear interaction for the solutions of the original perturbation system \eqref{1sa.1} in Theorem \ref{tsafhm1}.  In particular, the solutions have a uniform estimate with respect to $\varepsilon$ (see \eqref{2602142045} in Theorem \ref{tsafhm1}), this fact naturally motives ones to further investigate the limiting behavior of solutions as $\varepsilon\to 0$, i.e. the well-known small Alfv\'en number limit problem, the relevant progresses for which will be  complementarily introduced in Section \ref{sec2}. In this paper we will exploit the oscillatory integral structures of the linear fundamental solution with respect to $\varepsilon$ and the vanishing behavior of the nonlinear interaction to conclude that the small Alfv\'en number limit of solutions in Theorem \ref{tsafhm1} is zero, see Theorem \ref{tsafasdhm1} for details.
 
\section{Main results}
Before stating the main results, we shall introduce some notations used frequently throughout this paper.

(1) Basic notations: $\mathbb{R}_{0}^{+}:=[0,\infty)$. Let $x\in \mathbb{R}^n$, then $x_{\mm{v}}:=x_2$ for $n=2$, and $x_{\mm{v}}:=(x_2,x_3)^\top$ for $n=3$. $\partial_i:=\partial_{x_i}$, $\int:=\int_{\mathbb{R}^{n}}$, $\alpha:=\left(\alpha_{1},  \cdots,\alpha_{n}\right)$ is the multi-index with respect to the variable $x$, i.e. $\partial^\alpha=\partial_{1}^{\alpha_1}\cdots\partial_{n}^{\alpha_n}$. If  the two multi-indexes $\alpha$ and $\beta$ satisfy $\beta_i\leqslant \alpha_i$ for any $1\leqslant i\leqslant n$, we denote such relation by $\beta\leqslant \alpha$. We simplify the inequality
 $a\leqslant c b$ by $a\lesssim b$, where $c$ represents a generic positive constant, which at most depends on $\mu$, $\nu$, $n$ and the regularity index $m$ of initial data appearing in Theorem \ref{thm1}, and may vary from line to line. Similarly $a\leqslant c(\chi_1,\cdot,\chi_\ell) b$ by $a\lesssim_{\chi_1,\cdot,\chi_\ell} b$, where the positive constant $ c(\chi_1,\cdot,\chi_\ell)$ further depends on the parameters $\chi_1$, $\cdot$, $\chi_\ell$, and also may vary from line to line. 
	
(2) Simplified norms and function spaces:
\begin{align*}
&L^2:=L^2(\mathbb{R}^n),\ H^{j}:=W^{j, 2}\left(\mathbb{R}^{n}\right), \ H^j_{\sigma}:=\{w\in H^j~|~\mm{div}\,w=0\},\\
&\|\cdot\|_j:=\|\cdot\|_{H^j},\  \|\phi(x)\|_{L^r_{{x}_{1}}}:=\|\phi(x_1,\cdot)\|_{L^r(\mathbb{R})}\mbox{ for given }x_{\mm{v}}  ,\\ 
&\|f(x)\|^2_{H^j_{ {x}_{\mm{v}}}}:=\sum_{|\alpha|=0}^j\|(\partial^\alpha f)(\cdot,x_{\mm{v}})\|^2_{L^2(\mathbb{R}^{n-1})}\mbox{ for given }x_1,\\ &\|f(x_1,t)\|_{L^r_{a,x_1}}:=\|f(x_1, t)\|_{L^r((0,a)_t\times \mathbb{R})},\ \|f(x,t)\|_{L^r_aL^\tau}:=
\|f(x,t)\|_{L^r((0,a); L^\tau(\mathbb{R}^n))}, 
\end{align*}
where $j\geqslant 0$ is integer, $0<a\leqslant \infty$ and $1 \leqslant r$, $\tau\leqslant \infty$. 
 
\subsection{Results for the Cauchy problem \eqref{1.4}}
We introduce the first result concerning the existence of unique global-in-time solutions to the Cauchy problem \eqref{1.4}  with  a uniform estimate with respect to  the Alfv\'en number $\varepsilon $.
\begin{thm}\label{thm1}
Assume $n=2$, $3$, $\mu>0$, $\nu> 0$ and  
$(v^0,H^0)\in H^m_\sigma$  with the integer $m\geqslant 3$. There {exists a small constant}  $\delta\in(0,1)$, which depends only on $\mu$, $\nu$, $n$ and $m$ such that, for any $\varepsilon$ satisfying the condition of small  Alfv\'en number 
\begin{equation}\label{1.6}
\varepsilon  \max\{ \|(v^0,H^0) \|_m^2, \|(v^0,H^0) \|_m^3 \}\leqslant  \delta,
\end{equation}
the Cauchy problem \eqref{1.4} defined on $\mathbb{R}^n\times \mathbb{R}_0^+$ admits a unique global-in-time classical solution $(u,h)\in C^0(\mathbb{R}_0^+,H^m\times H^m)$. Moreover the solution satisfies
\begin{align}
\label{2504160945}
\|(u,h)(t)\|_m + \sqrt{\varepsilon}\|\nabla (u,h)\|_{L^2_tH^{m}} \lesssim \varepsilon \|(v^0,H^0)\|_m 
\end{align} for any $t>0$.
\end{thm}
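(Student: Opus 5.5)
The plan is a continuity (bootstrap) argument for the rescaled problem \eqref{1.4}, combining a standard $H^m$ energy estimate with a space--time bilinear estimate that exploits the dispersive Alfv\'en-wave structure along $\mathbf{e}^1$. Local well-posedness in $H^m_\sigma$, $m\geqslant 3$, is classical, so only a priori bounds are needed.

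\textbf{Energy estimate.} Set $A:=\|(v^0,H^0)\|_m$, so that $\|(u,h)(0)\|_m=\varepsilon A$. Apply $\partial^\alpha$, $|\alpha|\leqslant m$, and test against $\partial^\alpha(u,h)$. The linear coupling terms $\mathbf{e}^1\cdot\nabla h$ and $\mathbf{e}^1\cdot\nabla u$ cancel, and so do the top-order terms in $h\cdot\nabla h$, $h\cdot\nabla u$. Commutator estimates then give
\begin{align*}
\frac{d}{dt}\|(u,h)\|_m^2+\varepsilon\min\{\mu,\nu\}\|\nabla(u,h)\|_m^2\lesssim \|\nabla(u,h)\|_{L^\infty}\|(u,h)\|_m^2 .
\end{align*}
Hence \eqref{2504160945} follows from Gr\"onwall once we know $\int_0^T\|\nabla(u,h)\|_{L^\infty}\,dt\leqslant 1$ uniformly in $T$. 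A naive bound gives only $T\varepsilon A$, or $\varepsilon^{-1/2}\cdot\varepsilon A$ via the dissipation, which is not small. So the whole difficulty is to obtain a time-integrable $L^\infty$ bound for $\nabla(u,h)$ that is uniform in $\varepsilon$.

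\textbf{Linear structure.} Use the Els\"asser variables $Z^\pm:=u\pm h$. They satisfy
\begin{align*}
\partial_tZ^\pm\mp\partial_1Z^\pm-\tfrac{\varepsilon(\mu+\nu)}{2}\Delta Z^\pm-\tfrac{\varepsilon(\mu-\nu)}{2}\Delta Z^\mp+Z^\mp\cdot\nabla Z^\pm+\nabla q_\pm=0 .
\end{align*}
Since $\mu\neq\nu$ in general, the dissipation mixes the two families. I would therefore diagonalize the linear symbol exactly in Fourier variables. Its eigenvalues are
\begin{align*}
-\tfrac{\varepsilon(\mu+\nu)}{2}|\xi|^2\pm\sqrt{\tfrac{\varepsilon^2(\mu-\nu)^2}{4}|\xi|^4-\xi_1^2}.
\end{align*}
On the region $\varepsilon|\xi|^2\lesssim|\xi_1|$ these are heat-damped waves moving in opposite directions $x_1\mp t$. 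On the complementary high-frequency region both modes are parabolic, with decay rate at least $\sim\varepsilon\min\{\mu,\nu\}|\xi|^2$ (this lower bound uses $\mu,\nu>0$ essentially). Both regions must be handled with constants independent of $\varepsilon$.

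\textbf{Key bilinear estimate (main obstacle).} The nonlinearity $Z^\mp\cdot\nabla Z^\pm$ couples only counter-propagating waves, so it should be integrable in time. The estimate I would aim for uses the anisotropic norms $L^2_{x_{\mathrm v}}$, $L^r_{a,x_1}$ introduced above. For $f$ transported along $x_1-t$ and $g$ along $x_1+t$, with parabolic smoothing and forcing terms,
\begin{align*}
\int_0^T\!\!\int|f||g|\,dx\,dt\lesssim \bigl(\sup_{x_{\mathrm v}}\|f\|_{L^2_{T,x_1}}\bigr)\bigl(\sup_{x_{\mathrm v}}\|g\|_{L^2_{T,x_1}}\bigr)\times(\text{transverse norms}),
\end{align*}
together with a "flux" estimate bounding $\sup_{x_1}$ and $\sup_{x_{\mathrm v}}$ of $\|Z^\pm\|_{L^2_t H^{m-1}_{x_{\mathrm v}}}$ along the characteristic coordinate $x_1\pm t$. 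The flux estimate should be derived by multiplying by a weight such as $\arctan(x_1\pm t)$, a ghost-weight variant adapted to unequal diffusion; the off-diagonal term $\varepsilon(\mu-\nu)\Delta Z^\mp$ is absorbed using the dissipation $\varepsilon\|\nabla Z\|_{L^2_tH^m}^2$. Combined with Sobolev embedding in $x_{\mathrm v}$ and $m\geqslant3$, this yields
\begin{align*}
\int_0^T\|\nabla(u,h)\|_{L^\infty}\,dt\lesssim \varepsilon A+ \varepsilon^{1/2}(\varepsilon A)\cdot\varepsilon^{-1/2}A\cdots ,
\end{align*}
which should reduce to a bound of the form $C(\varepsilon A^2+\varepsilon A^3)$. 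Here the cubic term comes from the nonlinear forcing in the flux estimate, and the quadratic term comes from linear leakage between the modes of size $\varepsilon A\cdot A$. This is why the smallness condition \eqref{1.6} is stated as $\varepsilon\max\{A^2,A^3\}\leqslant\delta$. I expect this step, and in particular controlling the $\mu\neq\nu$ cross-diffusion uniformly in $\varepsilon$, to be the hardest part. My first attempt would be the Fourier-side diagonalization above, treating the residual mixing as a perturbation.

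\textbf{Closing the argument.} Define the bootstrap quantity $\mathcal{E}(T)$ as the sum of $\varepsilon^{-2}\sup_{t\leqslant T}\|(u,h)\|_m^2$, $\varepsilon^{-1}\|\nabla(u,h)\|_{L^2_TH^m}^2$, and the flux norms. Assume $\mathcal{E}\leqslant 2CA^2$. The two estimates above then improve this to $\mathcal{E}\leqslant CA^2(1+O(\delta))$, provided $\delta$ is small. Continuity gives the global solution and \eqref{2504160945}. Uniqueness follows from an $L^2$ energy estimate for the difference of two solutions, using $\nabla(u,h)\in L^1_tL^\infty$.
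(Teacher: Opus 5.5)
\textbf{The gap is the reduction to Gr\"onwall.} You close the $H^m$ estimate with multiplier $\|\nabla(u,h)\|_{L^\infty}$. This requires $\int_0^T\|\nabla(u,h)\|_{L^\infty}\,dt$ to be small, and it is not small, so no bilinear estimate can produce the bound $C(\varepsilon A^2+\varepsilon A^3)$ you aim for.

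\begin{itemize}
\item This quantity is invariant under the rescaling \eqref{2511042225}: $\int_0^T\|\nabla(u,h)\|_{L^\infty}\,dt=\int_0^{\varepsilon T}\|\nabla(v,H)\|_{L^\infty}\,ds$. A small Alfv\'en number therefore cannot help.
\item Consider the linear flow \eqref{1.11}, which the solution tracks (Theorem \ref{thma2}). On $0\leqslant t\leqslant c/\varepsilon$ the diffusion $\varepsilon\mu\Delta$, $\varepsilon\nu\Delta$ has had little effect, and $u\pm h$ are essentially translated along $x_1$. Hence $\|\nabla(u,h)(t)\|_{L^\infty}\gtrsim\varepsilon\|\nabla(v^0\pm H^0)\|_{L^\infty}$ on that interval.
\item So the time integral is bounded below by a multiple of $\|\nabla(v^0\pm H^0)\|_{L^\infty}$. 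This bound is independent of $\varepsilon$ and grows linearly with the amplitude of the data.
\item Gr\"onwall then gives at best $\|(u,h)(t)\|_m\lesssim\varepsilon A e^{CA}$. That is not \eqref{2504160945}, whose constant does not depend on the data. The bootstrap would also need something like $\varepsilon e^{CA}\ll1$ instead of \eqref{1.6}.
\end{itemize}

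The underlying reason is structural. Bounding the commutators by $\|\nabla(u,h)\|_{L^\infty}\|(u,h)\|_m^2$ pulls one Els\"asser factor out in $L^\infty$ and leaves $\|\partial^\alpha\Lambda_+\|_0\|\partial^\alpha\Lambda_-\|_0$. This discards exactly the null structure you identified, namely that $\Lambda_+$ and $\Lambda_-$ separate in $x_1$. What remains is linear in a single family, which has no $L^\infty$ decay beyond slow heat decay. A bilinear estimate for counter-propagating products cannot control a quantity that is linear in $\nabla(u,h)$.

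\textbf{What is needed, and what the paper does.} Keep the interaction inside the spatial integral, and pay for the third factor with the viscous dissipation rather than with an $L^1_t$ bound.

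\begin{itemize}
\item In Els\"asser form, the nonlinear part of $\frac{d}{dt}\|\partial^\alpha(u,h)\|_0^2$ is a sum of terms $\int\partial^\gamma\Lambda_\pm\cdot\nabla\partial^{\alpha-\gamma}\Lambda_\mp\cdot\partial^\alpha\Lambda_\mp\,dx$. The terms with $\gamma=0$ vanish by incompressibility.
\item H\"older and Sobolev, applied only in $x_{\mathrm v}$, bound the rest by the $L^2_{t,x_1}$ norm of $\|\nabla\Lambda_+\|_{H^{m-2}_{x_{\mathrm v}}}\|\nabla\Lambda_-\|_{H^{m-1}_{x_{\mathrm v}}}+\|\nabla\Lambda_-\|_{H^{m-2}_{x_{\mathrm v}}}\|\nabla\Lambda_+\|_{H^{m-1}_{x_{\mathrm v}}}$, times $\|\nabla(u,h)\|_{L^2_tH^m}$ (Lemma \ref{2511041741}).
\item This joint norm of the counter-propagating pair is controlled by the one-dimensional div-curl lemma (Lemma \ref{2510081319}) in $(t,x_1)$. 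It is applied to the $x_{\mathrm v}$-integrated energy density and flux of $\partial^\alpha(u,h)$ and the cross-helicity density and flux of $\partial^\gamma(u,h)$. Their combination satisfies $f_{11}f_{22}+f_{12}f_{21}=\frac18\bigl(\|\partial^\gamma\Lambda_-\|^2_{L^2_{x_{\mathrm v}}}\|\partial^\alpha\Lambda_+\|^2_{L^2_{x_{\mathrm v}}}+\|\partial^\gamma\Lambda_+\|^2_{L^2_{x_{\mathrm v}}}\|\partial^\alpha\Lambda_-\|^2_{L^2_{x_{\mathrm v}}}\bigr)$.
\item There is a boundary term $\sup_t\|(u,h)\|_m^4$. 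The forcing terms, including the $\mu\neq\nu$ diffusion handled directly in $(u,h)$, contribute only $\sup_t\|(u,h)\|_m^2(\sup_t\|(u,h)\|_m+\varepsilon)\|\nabla(u,h)\|^2_{L^2_tH^m}$ (Lemma \ref{2511041742}). No Fourier diagonalization is needed.
\item The resulting nonlinear contribution is absorbed by $\varepsilon\|\nabla(u,h)\|^2_{L^2_tH^m}$ precisely under \eqref{1.6}.
\end{itemize}

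A ghost-weight flux estimate like yours might replace the div-curl lemma. It would work only if it feeds into a trilinear bound of this kind, not into Gr\"onwall.
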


It's well-known that the Cauchy problem \eqref{1.4} admits a unique local(-in-time) solution for the generic initial data $(v^0,H^0)$. Therefore the key proof of Theorem \ref{thm1} is to  derive the stability estimate \eqref{2504160945} under some \emph{a priori} assumptions. Next we briefly sketch how to obtain \eqref{2504160945}.

 It is easy to see that the solution $(u,h)$ of \eqref{1.4} enjoys the following basic energy equality:
\begin{align}
\|u(t)\|_0^2 + \|h(t)\|_0^2 + 2\varepsilon\int^t_0( \mu  \|\nabla u\|_0^2 + \nu  \|\nabla h\|_0^2)\mm{d}\tau= 
\varepsilon^2 ( \|v^0\|_0^2 + \|H^0\|_0^2).
\label{2511041244}
\end{align}
Motivated by the basic energy equality above, we naturally expect that $(u,h)$ defined on $[0,T]$ may admit the following stability estimate
\begin{align} 
\label{2511041843}
E(T)+D(T):= \sup_{0 \leqslant t \leqslant T}\{\|(u,h)(t)\|_m^2\} + \varepsilon \|\nabla (u,h)\|_{L^2_TH^{m}}^2\lesssim \varepsilon^2 \|(v^0,H^0)\|_m^2 
\end{align}
under the \emph{a priori} assumption
\begin{align}
 \sup_{0 \leqslant t \leqslant T}\{\varepsilon^{-1} \|(u,h)(t)\|_{m}\} \leqslant K
\label{2510081829}
\end{align} with small Alfv\'en number $\varepsilon$.

Following a standard energy method from \eqref{2511041244} to \eqref{2511041843}, ones inevitably face some integrals involving nonlinear terms, which are called the nonlinear integrals for the simplicity, such as
$$ 
I:= \int_0^T\int  \partial^\alpha h\cdot\nabla h\cdot\partial^\alpha u \mm{d}x\mm{d}t\mbox{ with }|\alpha|=m.
 $$ 
 It is easy to see that 
\begin{align} 
|I|\lesssim \sup_{0 \leqslant t \leqslant T}\{\|h(t)\|_{m-1}\} \|\nabla (u,h)\|_{L^2_TH^{m-1}}^2 \lesssim 
\varepsilon K \|\nabla (u,h)\|_{L^2_TH^{m-1}}^2. 
\label{2511061718}
 \end{align}
Obviously ones can not use the above standard estimate to establish \eqref{2511041843},  unless 
\begin{align}
 \varepsilon\mbox{ in \eqref{2511061718} can be replaced by }\varepsilon^\vartheta\mbox{ with }\vartheta>1.
\label{2511061716}
\end{align}

To realize the idea in \eqref{2511061716}, we shall introduce \begin{align*}
\Lambda_{\pm}: =u \pm h
\end{align*}
to rewrite all nonlinear integrals, and then find that  (see Lemma \ref{2511041741} for the details)
\begin{align}
&\mbox{the sum of all nonlinear integrals  can be controlled by}\nonumber 
\\
&\left\|\| \nabla \Lambda_+\|_{H^{m-2}_{{x}_{\mm{v}}}} \| \nabla \Lambda_-\|_{H^{m-1}_{{x}_{\mm{v}}}} +\| \nabla \Lambda_-\|_{H^{m-2}_{{x}_{\mm{v}}}}\| \nabla \Lambda_+\|_{H^{m-1}_{{x}_{\mm{v}}}} \right\|_{L^2_{T,x_1}} \|  \nabla (u, h)\|_{L^2_TH^{m}}.
\label{202511041943}
\end{align}Exploiting the div-curl lemma in \cite{lai2025global,MR01542012}, we can deduce from \eqref{1.4} that (see Lemma \ref{2511041742})
\begin{align}
\label{25100712411}
& \left\|\|\nabla \Lambda_+\|_{H^{m-2}_{{x}_{\mm{v}}}}\| \nabla \Lambda_-
\|_{H^{m-1}_{{x}_{\mm{v}}}}+\| \nabla \Lambda_-
\|_{H^{m-2}_{{x}_{\mm{v}}}}\|\nabla \Lambda_+\|_{H^{m-1}_{{x}_{\mm{v}}}}\right\|_{L^2_{T,x_1}} \nonumber 
\\
&\lesssim  \sup_{0 \leqslant t \leqslant T}\{\|  (u,h)(t) \|_{m}^2\}\nonumber \\
&\quad +\sup_{0 \leqslant t \leqslant T}\{\|  (u,h)(t) \|_{m}\}
\left(\sup_{0 \leqslant t \leqslant T}\{\|  (u,h)(t) \|_{m}\}
 +{\varepsilon}\right)^{1/2}\| \nabla (u,h) \|_{L^2_TH^{m}},
\end{align}
which is called by the bilinear estimate.

Thanks to \eqref{202511041943} and \eqref{25100712411}, ones easily see that, under the assumption \eqref{2510081829},
\begin{align}
&\mbox{the sum of all nonlinear integrals can be controlled by}\nonumber\\
&\sup_{0 \leqslant t \leqslant T}\{\|  (u,h)(t) \|_{m}\}
\Bigg(\sup_{0 \leqslant t \leqslant T}\{\|  (u,h)(t) \|_{m}\}\nonumber \\
&+ \left(\sup_{0 \leqslant t \leqslant T}\{\|  (u,h)(t) \|_{m}\}
+{\varepsilon} \right)^{1/2}\| \nabla (u,h) \|_{L^2_TH^{m}} \Bigg) \| \nabla (u,h) \|_{L^2_TH^{m}} \nonumber \\
  &\lesssim\varepsilon K\left( \sup_{0 \leqslant t \leqslant T}\{\|  (u,h)(t) \|_{m}\}  +\varepsilon^{1/2} (K^{1/2}+1)\| \nabla (u,h) \|_{L^2_TH^{m}}\right) \|\nabla (u,h) \|_{L^2_TH^{m}}.
  \label{2511042004}
\end{align}
In particular, under the  smallness condition 
\begin{align}
\label{2511042007}
\varepsilon^{1/2}\max\{K,K^{3/2}\}\leqslant \delta\ll 1,
\end{align} 
ones easily observe from \eqref{2511042004} that all nonlinear integrals  can be absorbed by $E(T)$ and $D(T)$ defined in \eqref{2511041843}, and thus the desired stability estimate follows
\begin{align} E(T)+D(T)\leqslant(c \varepsilon  \|(v^0,H^0)\|_m)^2 =: K^2/4 
\label{11061426}
\end{align} 
 under the \emph{a priori} assumptions \eqref{2510081829} and \eqref{2511042007}. Based on this fact, ones can easily extend the unique local solutions to the global ones, and thus obtain Theorem \ref{thm1}.

Next we state the second result concerning the error estimate between the (nonlinear) solution in Theorem \ref{thm1} and the one
 of the corresponding (pressureless) linear problem. 
\begin{thm}\label{thma2}
Let $(u,h)$ be the solution in Theorem \ref{thm1} and 
$(u^\mm{L},h^\mm{L})$ the (unique) solution to the linear (pressureless) problem defined on $\mathbb{R}^n\times \mathbb{R}_0^+$:
 \begin{align}\label{1.11}
 &\begin{cases}
u_t^\mm{L} - \varepsilon {\mu}\Delta u^\mm{L}=\partial_1 h^\mm{L}, \\[1mm]
h_t^\mm{L}-\varepsilon\nu\Delta h^\mm{L}=\partial_1u^\mm{L},\\[1mm] 
u^\mm{L}|_{t=0}=\varepsilon v^0,\ h^\mm{L}|_{t=0}=\varepsilon H^0,
 \end{cases} 
 \end{align}  
 where $v^0$, $ H^0$ satisfy the assumptions in Theorem \ref{thm1}, and the linear solution $(u^\mm{L},h^\mm{L})$ enjoys the regularity as well as $(u,h)$. Then it holds that \begin{align}
&\|\nabla (u^{\mm{d}},h^{\mm{d}})(t)\|_{m-2} + \sqrt{\varepsilon}\|\nabla^2 (u^{\mm{d}},h^{\mm{d}})\|_{L^2_tH^{m-2}}  \nonumber \\
&\lesssim \varepsilon^{5/4} (\|(v^0,H^0)\|_m^{3/2}+ \|(v^0,H^0)\|_m^{ 7/4})\mbox{ for any }t>0,\label{1.12}
\end{align}
where we have defined that $(u^{\mm{d}},h^{\mm{d}}):=(u,h)-(u^\mm{L},h^\mm{L})$.
\end{thm}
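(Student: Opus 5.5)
We first write the equations for the difference $(u^{\mm{d}},h^{\mm{d}})$. Subtracting \eqref{1.11} from \eqref{1.4}, the difference solves
\begin{align*}
&u^{\mm{d}}_t-\varepsilon\mu\Delta u^{\mm{d}}-\partial_1 h^{\mm{d}}=-\nabla q+N_1,\quad N_1:=h\cdot\nabla h-u\cdot\nabla u,\\
&h^{\mm{d}}_t-\varepsilon\nu\Delta h^{\mm{d}}-\partial_1 u^{\mm{d}}=N_2,\quad N_2:=h\cdot\nabla u-u\cdot\nabla h,
\end{align*}
with zero initial data. Since $\mm{div}\, u^{\mm{L}}=\mm{div}\,h^{\mm{L}}=0$ is propagated by \eqref{1.11}, the difference is divergence free. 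Hence the pressure disappears when we test against divergence-free fields, and we may work with the Leray projection $\mathbb{P}N_1$ in place of $N_1$.

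We apply $\partial^\alpha$ with $1\leqslant|\alpha|\leqslant m-1$ and test with $\partial^\alpha(u^{\mm{d}},h^{\mm{d}})$. The linear cross terms $\partial_1 h^{\mm{d}}\cdot u^{\mm{d}}+\partial_1 u^{\mm{d}}\cdot h^{\mm{d}}$ cancel after integration by parts. This gives
\[
\|\nabla (u^{\mm{d}},h^{\mm{d}})(t)\|_{m-2}^2+\varepsilon\|\nabla^2(u^{\mm{d}},h^{\mm{d}})\|_{L^2_tH^{m-2}}^2\lesssim \Big|\sum_{1\leqslant|\alpha|\leqslant m-1}\int_0^t\!\!\int \partial^\alpha N\cdot\partial^\alpha (u^{\mm{d}},h^{\mm{d}})\,\mm{d}x\mm{d}\tau\Big|.
\]
We move one derivative onto the test function, so the right side is at most $\|N\|_{L^2_tH^{m-2}}\|\nabla^2(u^{\mm{d}},h^{\mm{d}})\|_{L^2_tH^{m-2}}$. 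Young's inequality then gives
\[
\text{LHS}\lesssim\varepsilon^{-1}\|N\|_{L^2_tH^{m-2}}^2 .
\]
The target is $\varepsilon^{5/2}(\cdots)^2$, so we need
\[
\|N\|_{L^2_tH^{m-2}}\lesssim \varepsilon^{7/4}\big(\|(v^0,H^0)\|_m^{3/2}+\|(v^0,H^0)\|_m^{7/4}\big).
\]

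The key structural fact is that in the Elsässer variables $\Lambda_\pm=u\pm h$ every quadratic nonlinearity is of the form $\Lambda_\mp\cdot\nabla\Lambda_\pm$:
\[
N_1\pm N_2=-\Lambda_\mp\cdot\nabla\Lambda_\pm .
\]
Thus only interactions between counter-propagating waves appear. A naive bound $\|\Lambda\|_{L^\infty_tH^{m}}\|\nabla\Lambda\|_{L^2_tH^{m-1}}$ together with Theorem \ref{thm1} gives only $\varepsilon\cdot\varepsilon^{1/2}=\varepsilon^{3/2}$, which is not enough.

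Instead we use anisotropic Sobolev product estimates in the transversal variable $x_{\mm{v}}$ (with $H^{m-2}_{x_{\mm{v}}}$ an algebra for $m\geqslant3$ in $n-1\leqslant2$ dimensions), together with an $L^\infty_{x_1}$ interpolation. This reduces $\|N\|_{L^2_tH^{m-2}}$ to the counter-propagating products
\[
\Big\|\|\Lambda_\mp\|_{H^{m-1}_{x_{\mm{v}}}}\|\nabla\Lambda_\pm\|_{H^{m-1}_{x_{\mm{v}}}}\Big\|_{L^2_{t,x_1}}.
\]
We control these by a variant of the bilinear estimate \eqref{25100712411} (Lemma \ref{2511041742}, via the div-curl lemma). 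The factor $\Lambda_\mp$ without a gradient can be handled by Gagliardo–Nirenberg interpolation in $x_1$ between $\|\Lambda_\mp\|_{L^2_{x_1}}$ and $\|\partial_1\Lambda_\mp\|_{L^2_{x_1}}$, giving a square-root structure. Inserting Theorem \ref{thm1}, which gives $\sup\|(u,h)\|_m\lesssim\varepsilon A$ and $\|\nabla(u,h)\|_{L^2H^m}\lesssim\varepsilon^{1/2}A$ with $A=\|(v^0,H^0)\|_m$, the right side of \eqref{25100712411} is bounded by
\[
\varepsilon^2A^2+\varepsilon A(\varepsilon A+\varepsilon)^{1/2}\varepsilon^{1/2}A\lesssim\varepsilon^2(A^2+A^{5/2}).
\]
Combining this with the interpolation (a geometric mean of an $\varepsilon^{3/2}$ bound and an $\varepsilon^2$ bound) should give the rate $\varepsilon^{7/4}$ and the powers $A^{3/2}$ and $A^{7/4}$. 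Any leftover factor $(1+A)$ is absorbed using \eqref{1.6}.

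\textbf{Main obstacle.} The hardest step is obtaining the full $H^{m-2}$ norm of $N$, rather than only its pairing against $\partial^\alpha(u,h)$ as in Lemma \ref{2511041741}, with one derivative fewer. This requires redoing the div-curl bilinear argument with the lower-order weight and tracking where the non-differentiated factor sits. If this fails, the fallback is to pair the nonlinear term directly with $\partial^\alpha(u^{\mm{d}},h^{\mm{d}})$. That pairing can be bounded by $\|\nabla(u^{\mm{d}},h^{\mm{d}})\|$ times the bilinear quantity, and a Gronwall-free absorption argument then closes the estimate.
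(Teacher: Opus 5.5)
Your argument is correct, but it takes a different route from the paper's at the decisive step. Write $A:=\|(v^0,H^0)\|_m$ and $w:=(u^{\mm{d}},h^{\mm{d}})$, and let $B$ be the left side of \eqref{2510071241}.

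\textbf{The paper's route.} The paper keeps every derivative on the nonlinearity and bounds the pairing by $B\,\|\nabla w\|_{L^2_tH^{m-1}}$. Since $\|\nabla w\|_{L^2_tL^2}$ is not among the dissipated quantities, this factor is not absorbed. It is simply bounded by $\|\nabla(u,h)\|_{L^2_tH^{m-1}}+\|\nabla(u^{\mm{L}},h^{\mm{L}})\|_{L^2_tH^{m-1}}\lesssim\varepsilon^{1/2}A$. With $B\lesssim\varepsilon^2(A^2+A^{5/2})$ this gives $\varepsilon^{5/2}(A^3+A^{7/2})$ for the squared energy, which is exactly \eqref{1.12}. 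Your ``fallback'' is essentially this argument, except that no absorption is possible or needed there.

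\textbf{Your route.} You move one derivative onto the test function, so the test factor becomes $\|\nabla^2 w\|_{L^2_tH^{m-2}}$, which the $\varepsilon$-dissipation does absorb. The price is that you need $\|N\|_{L^2_tH^{m-2}}$ itself, and this is not an obstacle.
\begin{itemize}
\item For $|\beta_1|+|\beta_2|\leqslant m-2$, the embedding $H^1(\mathbb{R}^{n-1})\hookrightarrow L^4(\mathbb{R}^{n-1})$ gives, pointwise in $(x_1,t)$, $\|\partial^{\beta_1}\Lambda_\mp\,\nabla\partial^{\beta_2}\Lambda_\pm\|_{L^2_{x_{\mm{v}}}}\lesssim\|\Lambda_\mp\|_{H^{m-1}_{x_{\mm{v}}}}\|\nabla\Lambda_\pm\|_{H^{m-1}_{x_{\mm{v}}}}$.
\item No algebra property is needed. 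That is just as well, since $H^1(\mathbb{R}^2)$ is not an algebra, so your claim fails for $m=3$, $n=3$.
\item Lemma \ref{2511041742} as stated already contains the undifferentiated factor $\|\Lambda_\mp\|_{H^{m-1}_{x_{\mm{v}}}}$.
\end{itemize}
Hence $\|N\|_{L^2_tH^{m-2}}\lesssim B$, with no new div-curl argument and no Gagliardo--Nirenberg step in $x_1$.

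Your geometric-mean interpolation is therefore superfluous. As sketched, it would not by itself produce the powers $A^{3/2}$ and $A^{7/4}$ anyway. Directly,
\[
\|\nabla w(t)\|_{m-2}+\sqrt{\varepsilon}\|\nabla^2 w\|_{L^2_tH^{m-2}}\lesssim\varepsilon^{-1/2}B\lesssim\varepsilon^{3/2}(A^2+A^{5/2}).
\]
Condition \eqref{1.6} gives $(\varepsilon A^2)^{1/4}\leqslant\delta^{1/4}$ and $(\varepsilon A^3)^{1/4}\leqslant\delta^{1/4}$, so the right side is $\lesssim\varepsilon^{5/4}(A^{3/2}+A^{7/4})$.

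\textbf{What each route buys.} Your route gives a strictly sharper rate, $\varepsilon^{3/2}$ instead of $\varepsilon^{5/4}$. This is because $B$ enters squared rather than being multiplied by the crude factor $\varepsilon^{1/2}A$. The paper's route reaches the stated powers of $A$ directly, uses only pairings, and needs no further appeal to \eqref{1.6}.
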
 

We mention that Theorem \ref{thma2} can be easily established by following the derivation of \eqref{2504160945} with slight modifications.  
  
\subsection{Results for the original system}

With Theorems \ref{thm1} and \ref{thma2} in hand, we immediately arrive at the following result for the original perturbation system \eqref{1sa.1} by the inverse transformations of \eqref{2511042225}.
\begin{thm}\label{tsafhm1}
Assume that $n=2$, $3$, $\mu>0$, $\nu> 0$ and  
$(v^0,H^0)\in H^m_\sigma$  with the integer $m\geqslant 3$ satisfies \eqref{1.6}, the Cauchy problem of the original perturbation  system \eqref{1sa.1} with the initial data $(v^0,H^0)$ admits a unique global-in-time classical solution $(v,H) \in C^0(\mathbb{R}_0^+,H^m\times H^m)$.
Moreover the solution enjoys the following properties:
\begin{enumerate}[(1)]
\item the stability estimate
\begin{align}
\label{2602142045}
\|(v,H)(t)\|_m +   \|\nabla (v,H)\|_{L^2_t H^{m}} \lesssim  \|(v^0,H^0)\|_m\mbox{ for any }t>0.
\end{align}
\item the vanishing phenomenon of the nonlinear interaction as $\varepsilon\to 0$. More precisely,
  \begin{align}
  \label{2602102130}
\|\nabla (v^{\mm{d}},H^{\mm{d}})(t)\|_{m-2} +\|\nabla^2 (v^{\mm{d}},H^{\mm{d}})\|_{L^2_tH^{m-2}}  \lesssim  \varepsilon^{1/4} (\|(v^0,H^0)\|_m^{3/2}+\|(v^0,H^0)\|_m^{7/4})
\end{align}
for any $t>0$,
where $(v^{\mm{d}},H^{\mm{d}}):=(v-v^{\mm{L}},H-H^{\mm{L}}) $ and $(v^{\mm{L}},H^{\mm{L}}) $ is the unique classical solution to 
 the linear problem 
 \begin{align}
  &\begin{cases}
   v^\mm{L}_t- \mu\Delta v^\mm{L}=\varepsilon^{-1}\partial_1H^\mm{L},\\  H^\mm{L}_t- \nu\Delta H^\mm{L} =\varepsilon^{-1}\partial_1v^\mm{L},\\  
(v^\mm{L}, H^\mm{L})|_{t=0}=(v^0,H^0).
 \end{cases} 
 \label{2602102212}
 \end{align}
 \end{enumerate} 
\end{thm}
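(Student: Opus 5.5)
The plan is to derive Theorem \ref{tsafhm1} directly from Theorems \ref{thm1} and \ref{thma2} by undoing the scaling \eqref{2511042225}. I will set
\[
v(x,t):=\varepsilon^{-1}u(x,t/\varepsilon),\quad H(x,t):=\varepsilon^{-1}h(x,t/\varepsilon),\quad p(x,t):=\varepsilon^{-2}q(x,t/\varepsilon),
\]
where $(u,h,q)$ is the global solution given by Theorem \ref{thm1}. A direct chain-rule computation then shows that $(v,H,p)$ solves \eqref{1sa.1} with data $(v^0,H^0)$. Indeed, $v_t=\varepsilon^{-2}u_t$, and each remaining term of the first equation of \eqref{1.4} carries the same factor $\varepsilon^{-2}$ after the substitution. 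For instance, $\varepsilon\mu\Delta u$ becomes $\varepsilon^{-1}\cdot\varepsilon\mu\,\varepsilon\Delta v$ in the correct units, and $\mathbf e^1\cdot\nabla h$ becomes $\varepsilon^{-1}\partial_1 (\varepsilon H)$ scaled appropriately. Regularity $C^0(\mathbb R^+_0,H^m)$ is preserved because the change of time variable is linear. Uniqueness for \eqref{1sa.1} follows the same way: any second solution, rescaled forward, is a solution of \eqref{1.4}, and uniqueness there is part of Theorem \ref{thm1}.

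For the stability estimate \eqref{2602142045}, I will rescale the norms. For fixed $t$, $\|(v,H)(t)\|_m=\varepsilon^{-1}\|(u,h)(t/\varepsilon)\|_m\lesssim\|(v^0,H^0)\|_m$ by \eqref{2504160945}. For the dissipation term, substituting $s=\tau/\varepsilon$ gives
\[
\|\nabla(v,H)\|_{L^2_tH^m}^2=\int_0^t\varepsilon^{-2}\|\nabla(u,h)(\tau/\varepsilon)\|_m^2\,\mathrm d\tau=\varepsilon^{-1}\|\nabla(u,h)\|^2_{L^2_{t/\varepsilon}H^m}.
\]
This is bounded by $\varepsilon^{-1}\cdot\varepsilon^{-1}(\varepsilon\|(v^0,H^0)\|_m)^2=\|(v^0,H^0)\|_m^2$, again using \eqref{2504160945}, which controls $\sqrt\varepsilon\|\nabla(u,h)\|$ by $\varepsilon\|(v^0,H^0)\|_m$.

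For the vanishing estimate \eqref{2602102130}, I first check that $(v^{\mathrm L},H^{\mathrm L}):=\varepsilon^{-1}(u^{\mathrm L},h^{\mathrm L})(x,t/\varepsilon)$ solves \eqref{2602102212}, and by linearity it is the unique solution. Hence $(v^{\mathrm d},H^{\mathrm d})=\varepsilon^{-1}(u^{\mathrm d},h^{\mathrm d})(\cdot,t/\varepsilon)$, and the same rescaling as above yields
\begin{align*}
\|\nabla(v^{\mathrm d},H^{\mathrm d})(t)\|_{m-2}+\|\nabla^2(v^{\mathrm d},H^{\mathrm d})\|_{L^2_tH^{m-2}}
=\varepsilon^{-1}\Big(\|\nabla(u^{\mathrm d},h^{\mathrm d})(t/\varepsilon)\|_{m-2}+\sqrt{\varepsilon}\|\nabla^2(u^{\mathrm d},h^{\mathrm d})\|_{L^2_{t/\varepsilon}H^{m-2}}\Big).
\end{align*}
By \eqref{1.12} this is at most $\varepsilon^{-1}\varepsilon^{5/4}(\cdots)=\varepsilon^{1/4}(\|(v^0,H^0)\|_m^{3/2}+\|(v^0,H^0)\|_m^{7/4})$.

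No step is a genuine obstacle. The only thing needing care is the bookkeeping of powers of $\varepsilon$: the time rescaling contributes $\varepsilon^{-1/2}$ to $L^2_t$ norms, and this is exactly compensated by the $\sqrt\varepsilon$ weights in \eqref{2504160945} and \eqref{1.12}.
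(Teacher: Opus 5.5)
Your proposal is correct and is exactly the paper's route: the paper obtains Theorem~\ref{tsafhm1} immediately from Theorems~\ref{thm1} and~\ref{thma2} by inverting the scaling \eqref{2511042225}, and your $\varepsilon$-bookkeeping is right (the $\varepsilon^{-1/2}$ produced by the time change $\tau=\varepsilon s$ in the $L^2_t$ norms is absorbed by the $\sqrt{\varepsilon}$ weights in \eqref{2504160945} and \eqref{1.12}). Only your illustrative line for the viscous term is garbled. The correct computation is $\varepsilon\mu\Delta u=\varepsilon^{2}\mu\Delta v$, which becomes $\mu\Delta v$ after multiplying by $\varepsilon^{-2}$, and this does not affect the argument.
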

\begin{rem}
In particular, if $\|(v^0,H^0) \|_m \geqslant 1 $, the smallness condition \eqref{1.6}
reduces to
\begin{equation*} 
\varepsilon   \|(v^0,H^0) \|_m^3 \leqslant  \delta.
\end{equation*}
Thus Assertion \ref{2510071030} with ``$\mu>0$ and $\nu>0$"  holds for $s=1/3$.    \end{rem}
\begin{rem} We mention that the existence result of large perturbation solutions with small $\varepsilon$ has been provided in \cite{cai2025small} for the Cauchy problem of the system \eqref{1sa.1} with $\mu=\nu>0$ by exploiting an additional regularity condition $\left\||\nabla|^{-1}(v^{0},H^0) \right\|_0<\infty$. However, such additional condition can be removed in the above Theorem \ref{tsafhm1}.
\end{rem}

\subsection{Small Alfv\'en number limit problem}\label{sec2}

Small Alfv\'en number limit (one type of large parameter limits  \cite{MR871107,MR957005} due to $\varpi\to \infty$ as $\varepsilon\to 0$) is one of the distinguished singular limits for MHD system \cite{MR3803773}. For 3D compressible ideal MHD system, the singular problem with respect to the Alfv\'en number in $\mathbb{T}^3$ (i.e. a spatially periodic domain) was first proposed by Klainerman--Majda \cite{MR615627}, in which the small Mach number limit was also investigated. 
However, they cannot prove convergence to an appropriate reduced system. To obtain the limit system,
Browning--Kreiss posed more unnatural assumptions on initial data, i.e., high-order time derivatives
of the solution are assumed to be bounded uniformly with respect to the small Alfv\'en numbers at $t=0$ \cite{MR665380}.

Later, Goto focused on the incompressible case of the 3D ideal MHD system under the natural assumptions on the initial data and first determined that the limiting system becomes essentially the system of the 2D motion \cite{MR1039472}.  Rubino further considered the viscous case \cite{MR1339828}. Recently Jiang--Ju--Xu extended Goto's result to the case that the domain is a slab with horizontal periodicity, and found that the limiting system is a 2D Euler system coupled with a linear transport equation \cite{MR4028271}. 
Similar results can be found in the corresponding compressible case, see \cite{MR3836806,MR3962511,MR4221327,MR4853427,MR4425021} for examples.  
 
It should be emphasized that the above works with a limit system  are only valid for well-prepared initial data. When the initial data is ill-prepared, the fast oscillating waves will be developed and the mathematical analysis is more complicated. 
Recently Ju--Wang--Xu mathematically verified that the \emph{local-in-time} solutions of 3D incompressible  ideal MHD equations with general initial data in $\mathbb{T}^3$ tend to a solution of 2D ideal MHD equations in the distribution sense as $\varepsilon\to 0$ \cite{MR4645635} by Schochet's fast averaging method in \cite{MR1303036}.
However, the small Alfv\'en number limit in 3D \emph{compressible ideal} MHD equations with \emph{general initial data}  is one of the largely wide open problems (see Majda's remark on pp. 72 in his monograph \cite{MR748308}). 
We mention the small Alfv\'en number limit of \emph{global-in-time} weak solutions of 3D non-isentropic compressible \emph{viscous resistive}  MHD equations has been investigated by Kuku$\mathrm{\check{c}}$ka \cite{MR2805860}, but the limit system is still 3D. 

Recently, Cai--Cui--Jiang--Liu revisited the small Alfv\'en number limit in $\mathbb{R}^n$ for the incompressible MHD equations  with  general initial data for the case
${\mu}=\nu \geqslant 0$, and used a energy method with Alinhac's ghost weight technique, to establish the conclusion on small Alfv\'en number limit, i.e. the \emph{global-in-time} classical solutions converge to zero as $\varepsilon\to 0$ for any given time-space variable $(x,t)$ with $t>0$ \cite{cai2025small}. 
The approach of Cai et.al. for the small Alfv\'en number limit  strongly depends on the symmetric structure of equations, i.e. the case ${\mu}=\nu$, and thus fails to the case ${\mu}\neq\nu$.
However, thanks to the error estimate \eqref{2602102130}, ones easily see that the derivation of the small Alfv\'en number limit of the solutions of the nonlinear problem reduces to the one of the solutions of the linear problem. In particular, we have the following small Alfv\'en number limit for the linear problem by analyzing the integral expression of the linear fundamental solution, which has oscillatory structures with respect to $\varepsilon$.
\begin{thm}\label{tsafasdhmdsfa1}
Let $n=2$, $3$, $\mu>0$, $\nu> 0$ and  
$(v^0,H^0)\in H^m_\sigma\cap L^1$  with the integer $m\geqslant 3$.
\begin{enumerate}[(1)]
 \item  We  denote the classical solution of the linear problem \eqref{2602102212} by $(v^\varepsilon_\mm{L},H^\varepsilon_\mm{L})$, thus the family of solutions $\{(v^\varepsilon_\mm{L},H^\varepsilon_\mm{L})\}_{\varepsilon>0}$ satisfies that, for any given $(x,t)\in \mathbb{R}^n\times \mathbb{R}^+$,
\begin{equation*} 
(v^\varepsilon_\mm{L},H^\varepsilon_\mm{L})\to 0 \mbox{ as }\varepsilon \to 0.
\end{equation*}
  \item  Additionally assume that the support of initial data $(v^0, H^0)$ is a subset of the slab $S_R$, where $S_R:=\{x\in \mathbb{R}^n~|~|x_1|<R\}$, thus, for any given $\theta\in (0,1)$, $l\in \mathbb{Z}^+$, and $t_0>0$, the family of solutions $\{v^\varepsilon_\mm{L},H^\varepsilon_\mm{L})\}_{\varepsilon>0}$ of the linear problem   \eqref{2602102212} satisfies that
\begin{equation*} 
\|(v^\varepsilon_\mm{L},H^\varepsilon_\mm{L} )(t)\|_{L^\infty(S_{l R})} \lesssim_{\theta,R, t_0,l}  \varepsilon^\theta \|(v^0, H^0)\|_{L^1}\end{equation*}
 for any $t>t_0$ and for any $\varepsilon\in (0,1)$.
\end{enumerate}
\end{thm}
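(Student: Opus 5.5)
We write the linear problem \eqref{2602102212} in Fourier variables. With $\hat w(\xi,t)$ denoting the Fourier transform in $x$, each Fourier mode satisfies the $2\times 2$ system
\begin{align*}
\partial_t\begin{pmatrix}\hat v\\ \hat H\end{pmatrix}=A(\xi)\begin{pmatrix}\hat v\\ \hat H\end{pmatrix},\qquad A(\xi)=\begin{pmatrix}-\mu|\xi|^2 & i\varepsilon^{-1}\xi_1\\ i\varepsilon^{-1}\xi_1 & -\nu|\xi|^2\end{pmatrix}.
\end{align*}
No pressure appears, since the system preserves divergence freeness componentwise. The eigenvalues of $A(\xi)$ are
\begin{align*}
\lambda_\pm=-\tfrac{(\mu+\nu)}{2}|\xi|^2\pm\sqrt{\tfrac{(\mu-\nu)^2}{4}|\xi|^4-\varepsilon^{-2}\xi_1^2}.
\end{align*}
The plan is to split frequency space into two regions. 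In the \emph{oscillatory region} $\varepsilon^{-1}|\xi_1|\geqslant |\mu-\nu||\xi|^2$, the square root is imaginary, approximately $\pm i\varepsilon^{-1}|\xi_1|\,(1+O(\varepsilon^2|\xi|^4/\xi_1^2))$. The complementary \emph{dissipative region} is contained in $\{|\xi_1|\lesssim\varepsilon|\xi|^2\}$. There the explicit fundamental matrix $e^{tA}$ stays bounded by $e^{-c|\xi|^2t}$, using $\mathrm{Re}\,\lambda_\pm\leqslant -\min\{\mu,\nu\}|\xi|^2$. Near the degenerate set $\lambda_+=\lambda_-$ we avoid the singular eigenprojections by writing $e^{tA}$ through the entire functions $\cosh(t\sqrt{\cdot})$ and $\sinh(t\sqrt{\cdot})/\sqrt{\cdot}$ of the discriminant.

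For part (1), write the solution at a fixed point as
\begin{align*}
(v,H)(x,t)=(2\pi)^{-n}\int e^{ix\cdot\xi}e^{tA(\xi)}(\hat v^0,\hat H^0)(\xi)\,\mathrm d\xi.
\end{align*}
Since $(v^0,H^0)\in L^1$, the transforms $\hat v^0,\hat H^0$ are bounded and continuous, and $\|e^{tA}\|\lesssim e^{-c|\xi|^2t}$ uniformly in $\varepsilon$. This gives an integrable majorant $e^{-c|\xi|^2t}\|(v^0,H^0)\|_{L^1}$ for fixed $t>0$. By dominated convergence it then suffices to show pointwise in $\xi$ that the integrand tends to zero, but this fails: the oscillation $e^{\pm i\varepsilon^{-1}\xi_1 t}$ does not vanish pointwise. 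So instead we apply the Riemann--Lebesgue lemma in the variable $\xi_1$. For a.e.\ $\xi$ with $\xi_1\neq0$, the integrand in the oscillatory region has the form
\begin{align*}
e^{\pm i t\varepsilon^{-1}|\xi_1|}\,g_\varepsilon(\xi),
\end{align*}
where $g_\varepsilon$ converges in $L^1(\mathbb{R}^n)$, as $\varepsilon\to0$, to a fixed integrable function carrying the weight $e^{-\frac{\mu+\nu}{2}|\xi|^2t}$. A Riemann--Lebesgue argument with frequency $t\varepsilon^{-1}\to\infty$ then gives decay. For the dissipative region, its measure within any ball tends to zero, so that part vanishes by dominated convergence.

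For part (2), we need a quantitative rate, and we get it by working in physical space in $x_1$. Take the partial Fourier transform only in $x_{\mm v}$ (frequency $\eta$), and solve the resulting 1D system in $x_1$ for each $\eta$. In the diagonalized variables $\Lambda_\pm=v\pm H$, the leading part is transport with speed $\mp\varepsilon^{-1}$ plus heat, with coupling of strength $|\mu-\nu|\Delta$. Thus, to leading order, $\Lambda_\pm(t)$ is the initial datum translated by $\mp t/\varepsilon$ in $x_1$ and smoothed by the heat kernel $e^{t\bar\kappa\Delta}$, where $\bar\kappa=(\mu+\nu)/2$. For $t>t_0$ and $|x_1|<lR$, the support of the initial data (which lies in $S_R$) ends up at distance at least $t/\varepsilon-(l+1)R$. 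The Gaussian tail of the kernel gives $e^{-c t/\varepsilon^2}$ smallness, and hence any power $\varepsilon^\theta$.

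The main obstacle is the coupling term, of order $(\mu-\nu)\Delta$ between the $\Lambda_+$ and $\Lambda_-$ families. Here the plan is a Duhamel/iterated expansion. Each interaction term contains the factor
\begin{align*}
\int_0^t e^{\pm 2i s\varepsilon^{-1}\xi_1}\,\mathrm ds,
\end{align*}
which we integrate by parts in $s$. This gains $\varepsilon/|\xi_1|$, but the gain degenerates near $\xi_1=0$. So we split the frequency space at $|\xi_1|\sim\varepsilon^{1-\theta}$:
\begin{itemize}
\item on the low-$\xi_1$ piece, the $\xi_1$-measure is $\varepsilon^{1-\theta}$, and we bound it using $\|\hat\cdot\|_{L^\infty}\leqslant\|\cdot\|_{L^1}$ together with the heat factor $e^{-c|\eta|^2t_0}$ to integrate in $\eta$;
\item on the high-$\xi_1$ piece, integration by parts yields $\varepsilon^{\theta}$.
\end{itemize}
This loss is why only $\varepsilon^\theta$ with $\theta<1$ (rather than full exponential decay) is claimed, and the constants depend on $t_0$, $R$ and $l$ through the heat factor and the support geometry.
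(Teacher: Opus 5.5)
Your route differs from the paper's, and part (1) is correct as sketched. In part (2), however, the step meant to produce $\varepsilon^\theta$ does not do so. With the cut at $|\xi_1|\sim\varepsilon^{1-\theta}$, the low piece costs $\varepsilon^{1-\theta}$ (the $\xi_1$-measure times the trivial bound) and the high piece costs $\varepsilon^{\theta}$. The total is $\varepsilon^{\min\{\theta,1-\theta\}}\leqslant\varepsilon^{1/2}$, so the claim fails for $\theta\in(1/2,1)$. With this accounting (trivial bound below the cut, uniform bound $\varepsilon/\mathrm{cut}$ above it), no single cut beats $\varepsilon^{1/2}$. Your remark that the cut explains the restriction $\theta<1$ is therefore off; the true loss is only logarithmic.

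A smaller justification issue: $\mathrm{Re}\,\lambda_\pm\leqslant-\min\{\mu,\nu\}|\xi|^2$ alone does not bound $\|e^{tA}\|$, because $A(\xi)$ is not normal and its eigenvectors coalesce where $\lambda_+=\lambda_-$. The bound $\|e^{tA}\|\leqslant e^{-\min\{\mu,\nu\}|\xi|^2t}$ is nevertheless true. The off-diagonal block is skew-Hermitian, so $\mathrm{Re}\langle Aw,w\rangle=-\mu|\xi|^2|w_1|^2-\nu|\xi|^2|w_2|^2$.

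The repair is to keep the gain in the form $\min\{1,\varepsilon/|\xi_1|\}$ and apply it to the exact remainder; this also removes any need to sum an iterated Duhamel series. Set $\omega=\varepsilon^{-1}\xi_1$, $\kappa=(\mu+\nu)/2$, $\sigma=(\mu-\nu)/2$ and $b_\pm:=e^{(\kappa|\xi|^2\mp\mathrm{i}\omega)t}\widehat{\Lambda}_\pm$. Then $b_+'=-\sigma|\xi|^2e^{-2\mathrm{i}\omega t}b_-$ and $b_-'=-\sigma|\xi|^2e^{2\mathrm{i}\omega t}b_+$. One integration by parts in $s$ gives
\[
|b_+(t)-b_+(0)|\leqslant\frac{|\sigma||\xi|^2}{2|\omega|}\big(|b_-(t)|+|b_-(0)|\big)+\frac{\sigma^2|\xi|^4}{2|\omega|}\int_0^t|b_+(s)|\,\mathrm{d}s .
\]
The dissipativity bound gives $|b_\pm(s)|\lesssim e^{|\sigma||\xi|^2s}|\widehat{\mathbf{U}^0}|$. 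Let $\widehat{R}$ be the solution minus the transported heat part. It then satisfies
\[
|\widehat{R}|\lesssim\min\big\{1,\varepsilon|\xi_1|^{-1}(|\xi|^2+t|\xi|^4)\big\}\,e^{-\min\{\mu,\nu\}|\xi|^2t}\,|\widehat{\mathbf{U}^0}|.
\]
Integrating in $\xi$ with no cut gives $\|R(t)\|_{L^\infty(\mathbb{R}^n)}\lesssim_{t_0}\varepsilon(1+|\ln\varepsilon|)\|\mathbf{U}^0\|_{L^1}$ for $t>t_0$, which beats every $\varepsilon^\theta$. This is exactly where $\min\{\mu,\nu\}>0$ enters: the growth $e^{|\sigma||\xi|^2t}$ produced by the coupling is absorbed by $e^{-\kappa|\xi|^2t}$. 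Combined with your exponentially small bound for the translated heat part on $S_{lR}$, this proves (2).

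Comparison with the paper:
\begin{itemize}
\item \emph{Part (2).} The paper estimates the kernel $\mathbf{K}$ directly. It splits frequencies into $D_1,D_2,D_3$ and runs repeated non-stationary phase in $\xi_1$ on $D_1$. There the slab enters only through boundedness of $x_1$ when $\partial_{\xi_1}$ hits $e^{\mathrm{i}x\cdot\xi}$.
\item \emph{What your repaired route does instead.} It uses oscillation in time rather than in frequency. It places the support assumption in the geometric separation $t/\varepsilon-(l+1)R$ and controls the interaction globally in $x$.
\item \emph{What each buys.} Your route gives the sharper rate $\varepsilon|\ln\varepsilon|$ and needs no bookkeeping of $\partial_{\xi_1}^\ell\Lambda$ or boundary terms on $\partial D_1$. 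The paper's route yields a pointwise estimate of the fundamental solution itself.
\item \emph{Part (1).} Your Riemann--Lebesgue argument on the Fourier side is likewise a valid alternative. The paper instead uses physical-space dominated convergence, based on $\mathbf{K}(y,t)\to0$ pointwise together with the uniform bound on $\|\mathbf{K}(t)\|_{L^\infty}$.
\end{itemize}
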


Exploiting the error estimate \eqref{2602102130}, Theorem \ref{tsafasdhmdsfa1} and the 
well-known Nirenberg interpolation inequality
$$\| f\|_{L^\infty}\lesssim \| f\|_0^{1-n/4}\|\nabla^2 f\|_0^{n/4},$$
 we immediately get the following small Alfv\'en number limit for the solutions of the original perturbation  system \eqref{1sa.1} with the given initial data $(v^0,H^0)$.
\begin{thm}\label{tsafasdhm1}
Let the assumptions in Theorem \ref{tsafhm1} be satisfied. We denote the classical solution $(v,H)$ in Theorems \ref{tsafhm1} by $(v^\varepsilon,H^\varepsilon)$ to emphasize that the solution depends on $\varepsilon$, and further assume the initial data  $(v^0, H^0)\in L^1$.\begin{enumerate}
                      \item[(1)] It holds,  for any given $(x,t)\in \mathbb{R}^n\times \mathbb{R}^+$,
\begin{equation*}
 (v^\varepsilon,H^\varepsilon)\to 0 \mbox{ as }\varepsilon \to 0.
\end{equation*} 
\item[(2)] Additionally assume that the support of initial data $(v^0, H^0)$ is a subset of the slab $S_R$, then it holds that, for any given $t_0>0$ and for any given $l\in \mathbb{Z}^+$,
\begin{align*}
& \|(v^\varepsilon,H^\varepsilon)(t)\|_{L^\infty(S_{l R})}  \\
 &\lesssim_{\theta,R, t_0,l} \varepsilon^{n/16} (\|(v^0,H^0)\|_m^{1+n/8}+\|(v^0,H^0)\|_m^{1+3n/16}+\|(v^0, H^0)\|_{L^1}) 
\end{align*}
 for any $t>t_0$ and for any $\varepsilon\in (0,1)$.
                    \end{enumerate}
\end{thm}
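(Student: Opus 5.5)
The plan is to write $(v^\varepsilon,H^\varepsilon)=(v^{\mathrm{L}},H^{\mathrm{L}})+(v^{\mathrm{d}},H^{\mathrm{d}})$ and to treat the two pieces separately. The linear part $(v^{\mathrm{L}},H^{\mathrm{L}})=(v^\varepsilon_{\mathrm{L}},H^\varepsilon_{\mathrm{L}})$ is controlled by Theorem \ref{tsafasdhmdsfa1}. The nonlinear remainder is controlled in $L^\infty$ by combining the error estimate \eqref{2602102130} with the Nirenberg interpolation inequality
\[
\|f\|_{L^\infty}\lesssim \|f\|_0^{1-n/4}\|\nabla^2 f\|_0^{n/4}.
\]

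\emph{Estimate of the remainder.} Fix $t>t_0$ and set $f=(v^{\mathrm{d}},H^{\mathrm{d}})(t)$. For $\|f\|_0$ I will not use \eqref{2602102130}, which only controls derivatives. Instead I use the crude bound $\|f\|_0\leqslant \|(v,H)(t)\|_0+\|(v^{\mathrm{L}},H^{\mathrm{L}})(t)\|_0\lesssim \|(v^0,H^0)\|_m$. This follows from \eqref{2602142045} together with the $L^2$ energy identity for \eqref{2602102212}, since the coupling terms $\varepsilon^{-1}\partial_1$ cancel in the energy.

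For the second derivatives, $\|\nabla^2 f\|_0\leqslant \|\nabla f\|_{m-2}$ because $m\geqslant3$, and \eqref{2602102130} gives $\|\nabla f\|_{m-2}\lesssim \varepsilon^{1/4}(A^{3/2}+A^{7/4})$ with $A:=\|(v^0,H^0)\|_m$. Inserting both bounds into the interpolation inequality yields
\[
\|f\|_{L^\infty}\lesssim A^{1-n/4}\,\varepsilon^{n/16}\bigl(A^{3n/8}+A^{7n/16}\bigr)\lesssim \varepsilon^{n/16}\bigl(A^{1+n/8}+A^{1+3n/16}\bigr),
\]
which is uniform in $x$ and $t$. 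These are exactly the exponents in part (2), which confirms this is the intended route.

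\emph{Part (1).} At a fixed point $(x,t)$ the linear part tends to $0$ by Theorem \ref{tsafasdhmdsfa1}(1). The remainder is bounded by the display above and hence tends to $0$ as $\varepsilon\to0$. The only subtlety is that the solutions are classical, so pointwise values make sense and the $L^\infty$ bound controls them; this holds since $H^m\hookrightarrow C^1$ for $m\geqslant3$.

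\emph{Part (2).} Under the slab support assumption, Theorem \ref{tsafasdhmdsfa1}(2) with $\theta=n/16<1$ gives $\|(v^{\mathrm{L}},H^{\mathrm{L}})(t)\|_{L^\infty(S_{lR})}\lesssim \varepsilon^{n/16}\|(v^0,H^0)\|_{L^1}$ for $t>t_0$ and $\varepsilon\in(0,1)$. Adding this to the remainder bound gives the stated estimate. The dependence on $\theta$ in the constant is harmless because $\theta$ is now fixed to be $n/16$.

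I expect no real obstacle, since the argument only assembles earlier results. The one point to verify is the $L^2$ bound on $f$: it requires that the linear problem's energy is non-increasing uniformly in $\varepsilon$, which is immediate from the skew-symmetry of the $\varepsilon^{-1}\partial_1$ coupling.
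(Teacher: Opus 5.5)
Your proposal is correct and follows the paper's argument. You split off the linear part and control it by Theorem~\ref{tsafasdhmdsfa1}, taking $\theta=n/16$ in part (2), then bound the remainder in $L^\infty$ through the Nirenberg inequality, using \eqref{2602102130} for the second derivatives. The $L^2$ bound on the remainder, obtained from \eqref{2602142045} and the linear energy identity, is exactly the step the paper leaves implicit, and it is what produces the stated exponents $1+n/8$ and $1+3n/16$.
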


The rest three sections are devoted to the proofs of Theorems \ref{thm1}, \ref{thma2} and \ref{tsafasdhmdsfa1} resp..
   
\section{Proof of Theorem \ref{thm1}} 
As mentioned before, the key proof of Theorem \ref{thm1} is to derive the \emph{a priori} stability estimate \eqref{2504160945} of solutions for the Cauchy problem \eqref{1.4} defined on a time interval $[0,T]$.

\subsection{A basic energy estimate}

To being with, we derive the basic energy estimate for the smooth solution $(u,h)$ of the Cauchy problem \eqref{1.4} defined on $[0,T]$.
\begin{lem}\label{2511041741}
It holds that
\begin{align}
\label{2510071saf637}
&  \|(u,  h)(t)\|_{m}^2 
+ \varepsilon\|\nabla  (u, h)\|_{L^2_tH^{m}}^2 \nonumber 
\\
& \lesssim  \varepsilon^2  \|(v^0,H^0)\|_m^2+\left\|\| \nabla \Lambda_+\|_{H^{m-2}_{{x}_{\mm{v}}}} \| \nabla \Lambda_-\|_{H^{m-1}_{{x}_{\mm{v}}}}\right.\nonumber \\
&\left.\quad  +\| \nabla \Lambda_-\|_{H^{m-2}_{{x}_{\mm{v}}}} \| \nabla \Lambda_+\|_{H^{m-1}_{{x}_{\mm{v}}}} \right\|_{L^2_{t,x_1}} \|  \nabla (u, h)\|_{L^2_tH^{m}}\mbox{ for any }t\in (0,T].
\end{align}
\end{lem}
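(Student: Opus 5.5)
Writing \eqref{1.4} in the Els\"asser form, I will use $\Lambda_\pm=u\pm h$. Adding and subtracting the equations gives
\begin{align*}
\partial_t\Lambda_\pm \mp\partial_1\Lambda_\pm+\Lambda_\mp\cdot\nabla\Lambda_\pm-\tfrac{\varepsilon(\mu+\nu)}{2}\Delta\Lambda_\pm-\tfrac{\varepsilon(\mu-\nu)}{2}\Delta\Lambda_\mp+\nabla q_*=0,
\end{align*}
with $q_*:=q-|h|^2/2$. The key structural point is that each nonlinearity transports one Els\"asser variable by the other. I will first apply $\partial^\alpha$ with $|\alpha|\leqslant m$ to \eqref{1.4} itself and take the $L^2$ inner product with $\partial^\alpha(u,h)$. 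The linear terms $\partial_1$ cancel after integration by parts. The pressure vanishes by $\mathrm{div}\,u=0$. The dissipation gives $\varepsilon\mu\|\nabla\partial^\alpha u\|_0^2+\varepsilon\nu\|\nabla\partial^\alpha h\|_0^2$. Integrating in time and summing over $\alpha$ yields $\|(u,h)(t)\|_m^2+\varepsilon\|\nabla(u,h)\|^2_{L^2_tH^m}\lesssim\varepsilon^2\|(v^0,H^0)\|_m^2+|\sum_\alpha N_\alpha|$. Here $N_\alpha$ collects the time integrals of the nonlinear terms. It is equivalent, and more convenient, to express $N_\alpha$ through the $\Lambda_\pm$ form: $\frac12\sum_\pm\int\!\!\int\partial^\alpha(\Lambda_\mp\cdot\nabla\Lambda_\pm)\cdot\partial^\alpha\Lambda_\pm$.

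Next, I expand by Leibniz. The top-order term $\Lambda_\mp\cdot\nabla\partial^\alpha\Lambda_\pm\cdot\partial^\alpha\Lambda_\pm$ vanishes by $\mathrm{div}\Lambda_\mp=0$. Every remaining term has the form $\int\!\!\int\partial^\beta\Lambda_\mp\cdot\nabla\partial^{\alpha-\beta}\Lambda_\pm\cdot\partial^\alpha\Lambda_\pm$ with $1\leqslant|\beta|\leqslant|\alpha|$. It therefore contains the product of a derivative of $\Lambda_-$ and a derivative of $\Lambda_+$: a genuine $+/-$ interaction, never a $+/+$ one. For $|\alpha|=m$ I integrate by parts once more in the terms where $|\beta|=m$, moving one derivative off $\partial^\alpha\Lambda_\pm$. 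This ensures each factor $\partial^\beta\Lambda_\mp$ and $\nabla\partial^{\alpha-\beta}\Lambda_\pm$ carries at most $m$ derivatives total, i.e. is of the form $\nabla^k\Lambda$ with $k\leqslant m$. The third factor is always bounded by $\nabla(u,h)\in H^m$.

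Then I use Cauchy--Schwarz in $(x_1,t)$ with anisotropic H\"older in $x_{\mathrm v}$. For fixed $(x_1,t)$, I bound $\int_{\mathbb{R}^{n-1}}|\nabla^{k}\Lambda_\mp||\nabla^{l}\Lambda_\pm||\nabla\partial^{\gamma}(u,h)|\,\mathrm{d}x_{\mathrm v}$, with $k+l\leqslant m+1$ and $k,l\geqslant1$. I put the factor with fewer derivatives in $L^\infty_{x_{\mathrm v}}$ via the $(n-1)$-dimensional Sobolev embedding $H^{1}_{x_{\mathrm v}}\hookrightarrow L^\infty$ when $n=2$ and $H^2_{x_{\mathrm v}}\hookrightarrow L^\infty$ when $n=3$, and the other in $L^2_{x_{\mathrm v}}$. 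Since one of $k,l$ is at most $(m+1)/2$ and $m\geqslant3$, the low factor is controlled by $\|\nabla\Lambda\|_{H^{m-2}_{x_{\mathrm v}}}$ and the high one by $\|\nabla\Lambda\|_{H^{m-1}_{x_{\mathrm v}}}$, where $H^j_{x_{\mathrm v}}$ includes all derivatives including $\partial_1$, as defined. This yields a pointwise-in-$(x_1,t)$ bound
$$\big(\|\nabla\Lambda_+\|_{H^{m-2}_{x_{\mathrm v}}}\|\nabla\Lambda_-\|_{H^{m-1}_{x_{\mathrm v}}}+\|\nabla\Lambda_-\|_{H^{m-2}_{x_{\mathrm v}}}\|\nabla\Lambda_+\|_{H^{m-1}_{x_{\mathrm v}}}\big)\,\|\nabla(u,h)(x_1,\cdot,t)\|_{H^m_{x_{\mathrm v}}}.$$
Cauchy--Schwarz in $L^2_{t,x_1}$ then gives the right-hand side of \eqref{2510071saf637}.

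The main obstacle is the derivative bookkeeping. In each Leibniz term the two $\Lambda$ factors must fit the $(m-2,m-1)$ split in $H_{x_{\mathrm v}}$ norms while the remaining factor is absorbed in $\nabla(u,h)\in L^2_tH^m$. This is delicate at the endpoints $|\beta|=1$ and $|\beta|=m$, which is where the extra integration by parts (using divergence-free conditions) is needed. I would check these extreme cases first for $n=3$, $m=3$.
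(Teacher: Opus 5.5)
Your skeleton is the paper's: you rewrite the nonlinear terms in Els\"asser form, kill the top-order term by $\mathrm{div}\,\Lambda_\mp=0$, integrate by parts once when $|\beta|=m$, use anisotropic H\"older in $x_{\mathrm v}$, then Cauchy--Schwarz in $(x_1,t)$. However, the H\"older/Sobolev allocation you specify does not close in the main case $n=3$, $m=3$.

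You always charge the third factor $\partial^\alpha\Lambda_\pm$, placed in $L^2_{x_{\mathrm v}}$, to the dissipation, and you put one of the other two factors in $L^\infty_{x_{\mathrm v}}$. For $n=3$ the costs are:
\begin{itemize}
\item a factor $\nabla^k\Lambda$ in $L^\infty(\mathbb{R}^2)$ costs $\|\nabla\Lambda\|_{H^{k+1}_{x_{\mathrm v}}}$;
\item a factor $\nabla^l\Lambda$ in $L^2_{x_{\mathrm v}}$ costs $\|\nabla\Lambda\|_{H^{l-1}_{x_{\mathrm v}}}$.
\end{itemize}
Take $m=3$, $|\alpha|=3$:
\begin{itemize}
\item For $|\beta|=1$, $(k,l)=(1,3)$ gives at best $\|\nabla\Lambda_\mp\|_{H^{2}_{x_{\mathrm v}}}\|\nabla\Lambda_\pm\|_{H^{2}_{x_{\mathrm v}}}$, i.e.\ $H^{m-1}\times H^{m-1}$.
\item For $|\beta|=2$, $(k,l)=(2,2)$, and the $L^\infty$ factor costs $\|\nabla\Lambda\|_{H^{3}_{x_{\mathrm v}}}$, which is not even in $H^{m-1}_{x_{\mathrm v}}$.
\item The same failure occurs for the term $\partial^{\alpha'}\Lambda_\mp\cdot\partial_i\nabla\Lambda_\pm\cdot\partial^\alpha\Lambda_\pm$ produced by your extra integration by parts, where $\partial^\alpha=\partial_i\partial^{\alpha'}$.
\end{itemize}
None of these has the form $H^{m-2}_{x_{\mathrm v}}\times H^{m-1}_{x_{\mathrm v}}$ required in \eqref{2510071saf637}. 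So your claim that ``one of $k,l$ is at most $(m+1)/2$ and $m\geqslant 3$'' puts the low factor in $H^{m-2}_{x_{\mathrm v}}$ is false for small $m$. In 3D your scheme only closes for $m\geqslant 4$, even if you let the two factors swap roles. In 2D with $m=3$, $k=l=2$, the claim is false as written and is rescued only by that swap.

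The missing idea is that the dissipation $\|\nabla(u,h)\|_{H^m}$ supplies $m+1$ derivatives, but your third factor uses only $m$ of them; at $m=3$ there is no slack to waste one. The paper instead uses $L^4\times L^4\times L^2$ in $x_{\mathrm v}$ via $H^1(\mathbb{R}^{n-1})\hookrightarrow L^4$. It charges the dissipation to whichever factor carries the most derivatives, placed in $L^4_{x_{\mathrm v}}$; this is often $\nabla\partial^{\alpha-\beta}\Lambda_\pm$ rather than $\partial^\alpha\Lambda_\pm$. Concretely, with all norms in $x_{\mathrm v}$:
\begin{itemize}
\item For $1\leqslant|\beta|\leqslant m-2$: $\|\partial^\beta\Lambda_\mp\|_{L^4}\lesssim\|\nabla\Lambda_\mp\|_{H^{m-2}}$ and $\|\partial^\alpha\Lambda_\pm\|_{L^2}\lesssim\|\nabla\Lambda_\pm\|_{H^{m-1}}$, while $\|\nabla\partial^{\alpha-\beta}\Lambda_\pm\|_{L^4}\lesssim\|\nabla\Lambda_\pm\|_{H^{m}}$ goes to the dissipation.
\item For $|\beta|=m-1$: $\partial^\beta\Lambda_\mp\in L^2$ costs $H^{m-2}$, $\nabla\partial^{\alpha-\beta}\Lambda_\pm\in L^4$ costs $H^2\subset H^{m-1}$, and $\partial^\alpha\Lambda_\pm\in L^4$ goes to the dissipation.
\item For $|\beta|=m$, after your integration by parts, bound by $\|\partial^{\alpha'}\Lambda_\mp\|_{L^2}\bigl(\|\nabla\Lambda_\pm\|_{L^\infty}\|\partial_i\partial^\alpha\Lambda_\pm\|_{L^2}+\|\partial_i\nabla\Lambda_\pm\|_{L^4}\|\partial^\alpha\Lambda_\pm\|_{L^4}\bigr)$, using $\|\nabla\Lambda_\pm\|_{L^\infty}+\|\partial_i\nabla\Lambda_\pm\|_{L^4}\lesssim\|\nabla\Lambda_\pm\|_{H^{2}}$.
\end{itemize}
With this redistribution your argument goes through for all $m\geqslant3$ and $n=2,3$.

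A minor point: in \eqref{1.4} the Lorentz force is already written as $h\cdot\nabla h$, so your $q_*$ is just $q$. This is harmless, since the pressure drops out anyway.
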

\begin{pf}
Let the multi-index $\alpha$ satisfy $0<|\alpha|\leqslant m$ and $\alpha_i\neq 0$ for some $i\in \{1,\ldots, n\}$. We further define $\beta$ as follows
$$\beta_j=\alpha_j\mbox{ for }j\neq i\mbox{ and } \beta_i=\alpha_i-1.$$
In addition, $$C^\alpha_{\alpha-\gamma}:= \left(
              \begin{array}{c}
                    \alpha\\
                    \alpha-\gamma
                 \end{array}
               \right):=\frac{ \alpha!}{\gamma!(\alpha-\gamma)!}.$$

Applying $\partial^\alpha$ to \eqref{1.4} yields 
\begin{equation}\label{1safa.4}
\begin{cases}
\partial^\alpha(u_t+  u\cdot\nabla u- \varepsilon {\mu}\Delta u+\nabla q )=\partial^\alpha(( \mathbf{e}^1+ h )\cdot \nabla h), \\[1mm]
\partial^\alpha(h_t+u\cdot\nabla h-\varepsilon\nu\Delta h)=\partial^\alpha((\mf{e}^{1} +h)\cdot\nabla u).
\end{cases}
\end{equation}
Taking the inner products of \eqref{1safa.4}$_1$, resp. \eqref{1safa.4}$_2$ and $\partial^\alpha u$ resp. $\partial^\alpha h$ in $L^2$, then summing the two resulting identities together, and finally using the integration by parts and the divergence-free condition in \eqref{1.4}$_3$, we obtain that
\begin{align}
&\frac{1}{2}\frac{\rm d}{{\rm d}t} \left( \|\partial^\alpha  u\|_{0}^2 + \|\partial^\alpha  h\|_{0}^2 \right)
+ \varepsilon(\mu \|\nabla \partial^\alpha  u\|_{0}^2 + \nu \|\nabla \partial^\alpha  h\|_{0}^2)\nonumber  \\
= &\sum_{ \gamma \leqslant\alpha  } C^\alpha_{\alpha-\gamma}\int\left(\left( \partial^\gamma h\cdot\nabla \partial^{\alpha-\gamma } h- \partial^\gamma u\cdot\nabla \partial^{\alpha-\gamma } u \right) \cdot\partial^\alpha u \right.\nonumber  \\
& \left.\vphantom{} +  \left(\partial^\gamma  h\cdot\nabla \partial^{\alpha-\gamma } u- \partial^\gamma  u\cdot\nabla \partial^{\alpha-\gamma } h  \right) \cdot\partial^\alpha h\right){\rm d}x\nonumber \\ 
= & \sum_{ \gamma \leqslant\alpha  }  \frac{C^\alpha_{\alpha-\gamma }}{2}\int
(( 
\partial^\gamma  \Lambda_+\cdot\nabla \partial^{\alpha-\gamma }\Lambda_--\partial^\gamma  \Lambda_-\cdot\nabla \partial^{\alpha-\gamma }\Lambda_+)\cdot \partial^\alpha h
 \nonumber \\
& -(
\partial^\gamma  \Lambda_+\cdot\nabla \partial^{\alpha-\gamma }\Lambda_-+\partial^\gamma  \Lambda_-\cdot\nabla \partial^{\alpha-\gamma }\Lambda_+)\cdot \partial^\alpha u){\rm d}x \nonumber \\
 = &- \sum_{ \gamma \leqslant\alpha  }  \frac{C^\alpha_{\alpha-\gamma }}{2}\int
\partial^\gamma  \Lambda_+\cdot\nabla \partial^{\alpha-\gamma }\Lambda_-\cdot \partial^\alpha \Lambda_-
 {\rm d}x\nonumber  \\ 
 &-\sum_{ \gamma \leqslant\alpha  }  \frac{C^\alpha_{\alpha-\gamma }}{2}\int\partial^\gamma  \Lambda_-\cdot\nabla \partial^{\alpha-\gamma }\Lambda_+\cdot \partial^\alpha\Lambda_+ {\rm d}x=:I_1+I_2. \label{2510081145}
\end{align}
Next we estimate for $I_1$ and $I_2$ in sequence.

Exploiting both embedding inequalities of 
$$H^2(\mathbb{R}^{n-1})\hookrightarrow L^\infty(\mathbb{R}^{n-1})\mbox{ and }H^1(\mathbb{R}^{n-1})\hookrightarrow L^4(\mathbb{R}^{n-1}),$$ the divergence-free conditions of $\Lambda_\pm$,  H\"older's inequality, and the integration by parts, we deduce that 
 \begin{align}
I_1=&-\sum_{\{\gamma \leqslant\alpha\,|\,0<|\gamma |\leqslant m-2\}} \frac{C^\alpha_{\alpha-\gamma }}{2} \int \partial^\gamma  \Lambda_+\cdot\nabla \partial^{\alpha-\gamma }\Lambda_-  \cdot \partial^\alpha \Lambda_-{\rm d}x\nonumber \\
&-\sum_{\{\gamma \leqslant\alpha\,|\, |\gamma |=m-1 \} } \frac{C^\alpha_{\alpha-\gamma }}{2} \int \partial^\gamma  \Lambda_+\cdot\nabla \partial^{\alpha-\gamma }\Lambda_-  \cdot \partial^\alpha \Lambda_-{\rm d}x\nonumber \\
&\begin{cases}
\displaystyle -\frac{1}{2}\int \partial^\alpha \Lambda_+\cdot\nabla  \Lambda_-  \cdot \partial^\alpha \Lambda_-{\rm d}x&\mbox{for }|\alpha|=1;\\
\displaystyle+\frac{1}{2}\int \partial^{\beta } \Lambda_+\cdot\partial_i(\nabla  \Lambda_-  \cdot \partial^\alpha \Lambda_-){\rm d}x&\mbox{for }|\alpha|>1
 \end{cases}\nonumber \\
 \lesssim & \sum_{\{\gamma \leqslant\alpha \,|\, 0<|\gamma |\leqslant m-2\}} \int_{\mathbb{R}}\|\partial^\gamma \Lambda_+\|_{L^4_{{x}_\mm{v}}}\|\partial^\alpha   \Lambda_-\|_{L^2_{{x}_\mm{v}}}\|\nabla \partial^{\alpha-\gamma }\Lambda_-\|_{L^4_{{x}_\mm{v}}}{\rm d}x_1 \nonumber\\ 
 &+\sum_{\{\gamma \leqslant\alpha\,|\,|\gamma |=m-1  \}}  \int_{\mathbb{R}}\|\partial^\gamma \Lambda_+\|_{L^2_{{x}_\mm{v}}}\|\nabla \partial^{\alpha-\gamma }\Lambda_-\|_{L^4_{{x}_\mm{v}}}\|\partial^\alpha   \Lambda_-\|_{L^4_{{x}_\mm{v}}}{\rm d}x_1 \nonumber\\
 &+  \begin{cases}
\displaystyle \int_{\mathbb{R}}\|\partial^\alpha \Lambda_+\|_{L^2_{{x}_\mm{v}}}\|\nabla \Lambda_-\|_{L^\infty_{{x}_\mm{v}}}\|\partial^\alpha   \Lambda_-\|_{L^2_{{x}_\mm{v}}}{\rm d}x_1&\mbox{for }|\alpha|=1,\\
\displaystyle \int_{\mathbb{R}}\|\partial^\beta \Lambda_+\|_{L^2_{{x}_\mm{v}}}(\|\nabla \Lambda_-\|_{L^\infty_{{x}_\mm{v}}}\|\partial_i \partial^\alpha   \Lambda_-\|_{L^2_{{x}_\mm{v}}}&\\
+\|\partial_i \nabla \Lambda_-\|_{L^4_{{x}_\mm{v}}}\|\partial^\alpha   \Lambda_-\|_{L^4_{{x}_\mm{v}}} ){\rm d}x_1&\mbox{for }|\alpha|>1
 \end{cases}\nonumber \\
\lesssim  &   \left\|\| \nabla \Lambda_+\|_{H^{m-2}_{{x}_\mm{v}}} \| \nabla \Lambda_-\|_{H^{m-1}_{{x}_\mm{v}}}\right\|_{L^2_{x_1}}\| \nabla (u, h)\|_{{m}}.
\label{2511031920}
\end{align} 

Similarly to \eqref{2511031920}, we also have
\begin{align*}
I_{2}& \leqslant\left\|\| \nabla \Lambda_-\|_{H^{m-2}_{{x}_{\mm{v}}}} \| \nabla \Lambda_+\|_{H^{m-1}_{{x}_{\mm{v}}}}\right\|_{L^2_{x_1}}\|\nabla (u, h)\|_{m} . 
\end{align*}
Consequently, inserting the above two estimates into \eqref{2510081145}, then integrating the resulting inequality over $(0,t)$, and finally using the energy identity \eqref{2511041244}, we arrive at \eqref{2510071saf637}.
\hfill$\Box$
\end{pf}

\begin{lem}
It holds that
\begin{align}
\label{2510081449}
\|\nabla q\|_{i}\lesssim \|h\|_{i}\|\nabla h\|_{i}+\|u\|_{i}\|\nabla u\|_{i}\mbox{ for any }2 \leqslant i\leqslant m .
\end{align}
\end{lem}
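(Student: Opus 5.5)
Taking the divergence of \eqref{1.4}$_1$ and using $\mm{div}\,u=\mm{div}\,h=0$, I will first derive the elliptic equation for the pressure,
\begin{align*}
-\Delta q=\mm{div}(u\cdot\nabla u)-\mm{div}(h\cdot\nabla h)-\mm{div}(\mf{e}^1\cdot\nabla h).
\end{align*}
The linear term drops out because $\mm{div}(\partial_1 h)=\partial_1\mm{div}\,h=0$, and the viscous term drops out because $\mm{div}\,\Delta u=0$. I will then use the divergence-free conditions to rewrite the nonlinear terms as $\mm{div}(u\cdot\nabla u)=\partial_i\partial_j(u_iu_j)$, and similarly for $h$. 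Hence
\begin{align*}
\nabla q=\nabla(-\Delta)^{-1}\mm{div}(u\cdot\nabla u-h\cdot\nabla h),
\end{align*}
that is, $\nabla q=\mathcal{Q}(u\cdot\nabla u-h\cdot\nabla h)$, where $\mathcal{Q}$ is the Leray-type projection onto gradient fields. Its Fourier multiplier is $\xi\xi^\top/|\xi|^2$, which is bounded on $H^i$ for every $i\geqslant 0$ by Plancherel.

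This gives
\begin{align*}
\|\nabla q\|_i\lesssim\|u\cdot\nabla u\|_i+\|h\cdot\nabla h\|_i .
\end{align*}
It then remains to show $\|f\cdot\nabla g\|_i\lesssim\|f\|_i\|\nabla g\|_i$ for $2\leqslant i$ and $n\leqslant 3$. Since $i\geqslant 2>n/2$, the space $H^i(\mathbb{R}^n)$ is a Banach algebra, so
\begin{align*}
\|f\,\partial_j g\|_i\lesssim\|f\|_i\|\partial_j g\|_i .
\end{align*}
The algebra property itself follows from the Leibniz rule: after expanding $\partial^\alpha(f\partial_jg)$, each term is handled by Hölder's inequality with $L^\infty\times L^2$ or $L^4\times L^4$, together with the embeddings $H^2\hookrightarrow L^\infty$ and $H^1\hookrightarrow L^4$ in dimensions $2$ and $3$. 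Taking $f=u,g=u$ and then $f=h,g=h$ yields \eqref{2510081449}.

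The only delicate point is to justify the pressure representation itself, that is, the uniqueness of $\nabla q$ among $L^2$ gradients. Since $(u,h)\in H^m$ with $m\geqslant 3$, the right-hand side $u\cdot\nabla u-h\cdot\nabla h$ lies in $L^2$. The pressure gradient is determined up to harmonic gradients in $L^2$, and these vanish, so the representation is justified. Beyond this, the argument is routine.
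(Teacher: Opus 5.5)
Your argument is correct and is essentially the paper's. The paper also reduces to $\|\partial^\alpha\nabla q\|_0\leqslant\|\partial^\alpha(h\cdot\nabla h-u\cdot\nabla u)\|_0$, obtained by testing $\partial^\alpha\Delta q=\partial^\alpha\mathrm{div}(h\cdot\nabla h-u\cdot\nabla u)$ against $\partial^\alpha q$ (the energy-method counterpart of your Plancherel bound), and then applies the product estimate \eqref{2511052109} where you use the algebra property. One harmless sign slip: $\nabla(-\Delta)^{-1}\mathrm{div}$ has symbol $-\xi\xi^\top/|\xi|^2$, so in fact $\nabla q=\mathcal{Q}(h\cdot\nabla h-u\cdot\nabla u)$, which does not affect the norm bound.
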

\begin{pf}
Let $\alpha$ satisfy $0\leqslant |\alpha|\leqslant i$.
Applying $\mm{div}$ to \eqref{1.4}$_1$ and using the divergence-free condition of $u$ yields 
 \begin{align*} 
 \partial^\alpha  \Delta q = \partial^\alpha\mm{div}( h \cdot \nabla h -u\cdot\nabla u).
\end{align*}
Taking the inner product of the above identity and $ \partial^\alpha    q$ in $L^2$, and then using 
H\"older's inequality, the integration by parts, the well-known product estimate
\begin{align}
\|\phi\psi\|_{j} \lesssim \|\phi\|_{j}\|\psi\|_2+\|\phi\|_2\|\psi\|_{j}\mbox{ for any }{j}\geqslant 2,
\label{2511052109}
\end{align}  we obtain that
 \begin{align*} 
 \|\partial^\alpha  \nabla q \|_0\lesssim \| \partial^\alpha( h \cdot \nabla h -u\cdot\nabla u)\|_0\lesssim \|h\|_{i}\|\nabla h\|_{i}+\|u\|_{i}\|\nabla u\|_{i},
\end{align*}
which yields \eqref{2510081449}. The proof is complete. 
\hfill$\Box$
\end{pf}
\subsection{A bilinear estimate}
This section is devoted to the derivation of a bilinear estimate. To begin with, we shall introduce the the following div-curl lemma.
\begin{lem}[Div-curl lemma, see Lemma 2.3 in \cite{lai2025global}]
\label{2510081319}
Suppose that $f_{ij}(y,t)$ is defined on $\mathbb{R}\times \mathbb{R}^+_0$ and satisfies
\begin{align*}
\begin{cases}
\partial_t f_{11} + \partial_y f_{12} = G_1, \\
\partial_t f_{21} - \partial_y f_{22} = G_2
\end{cases}
\end{align*}
and $f_{ij} \to 0$ as $y \to \infty$ for $i$, $j\in \{1, 2\}$, then it holds that
\begin{align*}
&\int_0^T \int_{\mathbb{R}} (f_{11} f_{22} + f_{12} f_{21}) {\rm d}y{\rm d}t \nonumber\\
&\leqslant 2 \sup_{0 \leqslant t \leqslant T} \{\|f_{11}(t)\|_{L^1(\mathbb{R})} \|f_{21}(t)\|_{L^1(\mathbb{R})}\} + \left|\int_0^T \int_{\mathbb{R}} \left(\int_{-\infty}^y f_{11}(z,t) {\rm d}z\right) G_2(y,t) {\rm d}y{\rm d}t\right|  \nonumber \\
&\quad + \left|\int_0^T \int_{\mathbb{R}} \left(\int_y^{+\infty} f_{21}(z,t) {\rm d}z\right) G_1(y,t) {\rm d}y{\rm d}t\right|,
\end{align*}
provided that the right side is bounded.
\end{lem}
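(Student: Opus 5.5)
The plan is to prove the div-curl inequality by a space--time integration-by-parts argument, introducing the primitives
\begin{align*}
F_1(y,t):=\int_{-\infty}^y f_{11}(z,t)\,{\rm d}z,\qquad F_2(y,t):=\int_y^{+\infty} f_{21}(z,t)\,{\rm d}z,
\end{align*}
so that $\partial_y F_1=f_{11}$ and $\partial_y F_2=-f_{21}$. The key object will be the time derivative of the bilinear quantity
$$\int_{\mathbb{R}} F_1F_2\,{\rm d}y=\int\!\!\int_{z<y'} f_{11}(z,t)f_{21}(y',t)\,{\rm d}z\,{\rm d}y',$$
which is bounded in absolute value by $\|f_{11}(t)\|_{L^1}\|f_{21}(t)\|_{L^1}$.

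First I will integrate the two equations in $y$. This gives $\partial_tF_1+f_{12}=\int_{-\infty}^yG_1$ and $\partial_tF_2-f_{22}=\int_y^{\infty}G_2$, using $f_{12},f_{22}\to0$ at infinity. I expect it is cleaner to avoid differentiating $F_1,F_2$ directly and instead compute
\begin{align*}
\frac{\rm d}{{\rm d}t}\int F_1F_2\,{\rm d}y
=\int \partial_tF_1\, F_2\,{\rm d}y+\int F_1\,\partial_tF_2\,{\rm d}y .
\end{align*}
For the first term I will write $\int \partial_tF_1 F_2=\int(\partial_t f_{11})\,\tilde F_2$ after integrating by parts in $y$, where $\tilde F_2$ is a primitive of $F_2$. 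That route gets messy, so instead I will substitute the equations directly into the definition of $F_1$ and $F_2$:
\begin{align*}
\int \partial_tF_1F_2=\int\Big(\int_{-\infty}^y(G_1-\partial_zf_{12})\Big)F_2
=-\int f_{12}F_2+\int\Big(\int_{-\infty}^yG_1\Big)F_2 .
\end{align*}
Integrating by parts, $-\int f_{12}F_2=\ldots$ is not yet the required form. The better choice is to pair each equation with the primitive of the \emph{other} function. I will compute $\int \partial_t f_{11}\,F_2 + \int \partial_t f_{21}\,F_1$, which equals $\frac{\rm d}{{\rm d}t}\int f_{11}F_2$ because $\int f_{21}F_1=\int f_{11}F_2$ by Fubini. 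Substituting the equations gives
$$\frac{\rm d}{{\rm d}t}\int f_{11}F_2=\int(G_1-\partial_yf_{12})F_2+\int(G_2+\partial_yf_{22})F_1 .$$
Integrating by parts in $y$ yields $\int(-\partial_yf_{12})F_2=\int f_{12}\partial_yF_2=-\int f_{12}f_{21}$ and $\int \partial_yf_{22}F_1=-\int f_{22}f_{11}$, with boundary terms vanishing by decay.

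Hence
$$\int(f_{11}f_{22}+f_{12}f_{21})\,{\rm d}y=-\frac{\rm d}{{\rm d}t}\int f_{11}F_2\,{\rm d}y+\int G_1F_2\,{\rm d}y+\int G_2F_1\,{\rm d}y.$$
Integrating over $(0,T)$, the time-derivative term is bounded by $|\int f_{11}F_2(T)|+|\int f_{11}F_2(0)|\le 2\sup_t\|f_{11}\|_{L^1}\|f_{21}\|_{L^1}$, since $|F_2|\le\|f_{21}\|_{L^1}$. The remaining two terms are exactly the two absolute-value terms in the statement, which gives the claimed estimate.

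The only delicate point is justifying the integrations by parts and Fubini. This requires $f_{11},f_{21}\in L^1$, the vanishing of $f_{12}F_2$ and $f_{22}F_1$ at $y=\pm\infty$, and enough regularity in $t$. I would handle this under the stated decay assumption together with the standing hypothesis that the right-hand side is finite, approximating by smooth compactly supported data if needed.
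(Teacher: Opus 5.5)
Your final argument is correct. The paper does not prove this lemma itself; it quotes it from Lemma 2.3 of \cite{lai2025global}. What you end up with is the standard proof. With $F_2=\int_y^{\infty}f_{21}\,{\rm d}z$, one has $f_{21}=-\partial_yF_2$, and the second equation gives $f_{22}=-\partial_tF_2+\int_y^{\infty}G_2\,{\rm d}z$. Hence, pointwise,
\begin{equation*}
f_{11}f_{22}+f_{12}f_{21}=-\partial_t(f_{11}F_2)-\partial_y(f_{12}F_2)+G_1F_2+f_{11}\int_y^{\infty}G_2\,{\rm d}z ,
\end{equation*}
which is the div-curl null structure behind the name. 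Integrating over $\mathbb{R}\times(0,T)$, and applying Fubini to the last term ($\int f_{11}\int_y^\infty G_2=\int G_2F_1$), gives exactly your identity for $\frac{\rm d}{{\rm d}t}\int f_{11}F_2\,{\rm d}y$. Your subsequent bound $|\int f_{11}F_2|\leqslant\|f_{11}\|_{L^1}\|f_{21}\|_{L^1}$ at $t=0$ and $t=T$ then yields the stated inequality.

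Your exploratory opening contains two slips that should be removed from any write-up.

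First, $\int_{\mathbb{R}}F_1F_2\,{\rm d}y$ is not the double integral $\iint_{z<y'}f_{11}(z)f_{21}(y')$. It actually equals $\iint_{z<w}(w-z)f_{11}(z)f_{21}(w)\,{\rm d}z\,{\rm d}w$, so it is not controlled by $\|f_{11}\|_{L^1}\|f_{21}\|_{L^1}$. Its time derivative also produces $f_{12}F_2$ and $F_1f_{22}$ rather than the products you need. The right interaction functional is $\int f_{11}F_2\,{\rm d}y=\int f_{21}F_1\,{\rm d}y$, which is what you eventually use.

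Second, the correct relation is $\partial_tF_2+f_{22}=\int_y^{\infty}G_2\,{\rm d}z$, not $\partial_tF_2-f_{22}=\int_y^{\infty}G_2\,{\rm d}z$.

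Neither slip enters your final computation, and its signs are all correct.
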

Next we establish the following bilinear estimate.
\begin{lem}\label{2511041742}
It holds that
\begin{align}
\label{2510071241}
& \left\|\| \Lambda_+\|_{H^{m-1}_{{x}_{\mm{v}}}}\| \nabla \Lambda_-
\|_{H^{m-1}_{{x}_{\mm{v}}}}+\|\Lambda_-
\|_{H^{m-1}_{{x}_{\mm{v}}}}\| \nabla \Lambda_+\|_{H^{m-1}_{{x}_{\mm{v}}}}\right\|_{L^2_{T,x_1}} \nonumber 
\\
&\lesssim
\sup_{0 \leqslant t \leqslant T}\{\|  (u,h) (t)\|_{m}^2\}\nonumber \\
&\qquad  + 
 \sup_{0 \leqslant t \leqslant T}\{\|  (u,h) (t)\|_{m}\}  \left(\sup_{0 \leqslant t \leqslant T}\{\|  (u,h) (t)\|_{m}\}
 +{\varepsilon}\right)^{1/2}\| \nabla (u,h) \|_{L^2_TH^{m}}.
\end{align}
\end{lem}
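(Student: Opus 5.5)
We rewrite \eqref{1.4} in Els\"asser form and apply the div-curl lemma (Lemma \ref{2510081319}) in the single variable $y=x_1$, with all transverse information packed into $H^{m-1}_{x_{\mm{v}}}$ norms. Adding and subtracting the first two equations of \eqref{1.4} gives
\begin{align*}
\partial_t\Lambda_+ -\partial_1\Lambda_+ &= -\Lambda_-\cdot\nabla\Lambda_+ -\nabla q+\tfrac{\varepsilon(\mu+\nu)}{2}\Delta\Lambda_+ +\tfrac{\varepsilon(\mu-\nu)}{2}\Delta\Lambda_-,\\
\partial_t\Lambda_- +\partial_1\Lambda_- &= -\Lambda_+\cdot\nabla\Lambda_- -\nabla q+\tfrac{\varepsilon(\mu+\nu)}{2}\Delta\Lambda_- +\tfrac{\varepsilon(\mu-\nu)}{2}\Delta\Lambda_+ .
\end{align*}
Thus $\Lambda_-$ travels right and $\Lambda_+$ travels left along $x_1$ at unit speed. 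The right-hand sides are either quadratic or carry a factor $\varepsilon$, and the cross term $(\mu-\nu)\Delta\Lambda_\mp$ is harmless because it is linear with an $\varepsilon$ in front.

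\textbf{Setting up the div-curl lemma.} Fix a multi-index $\alpha$ with $|\alpha|\leqslant m-1$ and a multi-index $\beta$ with $|\beta|\leqslant m$, where $\beta$ carries one extra derivative to represent $\nabla\Lambda$. To control $\|\,\|\partial^\alpha\Lambda_+\|_{L^2_{x_{\mm{v}}}}\|\partial^\beta\Lambda_-\|_{L^2_{x_{\mm{v}}}}\|_{L^2_{T,x_1}}^2$, we form the one-dimensional densities
\[
f_{11}=\|\partial^\alpha\Lambda_+\|^2_{L^2_{x_{\mm{v}}}},\quad f_{12}=-f_{11},\quad f_{21}=\|\partial^\beta\Lambda_-\|^2_{L^2_{x_{\mm{v}}}},\quad f_{22}=f_{21}.
\]
Multiplying the differentiated equations by $2\partial^\alpha\Lambda_+$ and $2\partial^\beta\Lambda_-$ respectively and integrating in $x_{\mm{v}}$ gives $\partial_tf_{11}+\partial_1f_{12}=G_1$ and $\partial_tf_{21}-\partial_1f_{22}=G_2$. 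The integrand $f_{11}f_{22}+f_{12}f_{21}$ is then $0$, which is wrong. The sign must be arranged so that the counter-propagating pair gives $f_{11}f_{22}+f_{12}f_{21}=2f_{11}f_{21}$. This is exactly the purpose of the two opposite transport directions, and Lemma \ref{2510081319} is stated for that configuration. The sign conventions will be adjusted to fit.

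\textbf{Bounding the right side of Lemma \ref{2510081319}.}
\begin{enumerate}
\item[(i)] The boundary term $\sup_t\|f_{11}\|_{L^1}\|f_{21}\|_{L^1}\leqslant\sup_t\|(u,h)\|_m^4$. After taking the square root this produces the first term $\sup_t\|(u,h)\|_m^2$ on the right of \eqref{2510071241}.
\item[(ii)] The terms with $G$ are bounded using $\int_{-\infty}^{y}f_{11}\leqslant\|(u,h)\|_m^2$ and $\int_y^\infty f_{21}\leqslant\|(u,h)\|_m^2$. These reduce matters to bounding $\|G_1\|_{L^1_{T,x}}$ and $\|G_2\|_{L^1_{T,x}}$.
\item[(iii)] Dissipative parts of $G$. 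These are $\varepsilon\int\partial^\beta\Lambda_-\cdot\partial^\beta\Delta\Lambda_\pm\,\mathrm{d}x_{\mm{v}}$. We integrate by parts in $x_{\mm{v}}$ only, while the $\partial_1^2$ pieces are kept and integrated in $x_1$ inside the full space-time integral. Their contribution is bounded by $\varepsilon\|\nabla(u,h)\|_{L^2_TH^m}^2$, up to the top-order issue discussed below.
\item[(iv)] Pressure parts of $G$. These are bounded by $\|\nabla(u,h)\|_{L^2_TH^{m}}\|\nabla q\|_{L^2_TH^{m-1}}$. By \eqref{2510081449}, $\|\nabla q\|_{m-1}\lesssim\|(u,h)\|_{m}\|\nabla(u,h)\|_{m-1}$, so this contribution is $\lesssim\sup_t\|(u,h)\|_m\,\|\nabla(u,h)\|^2_{L^2_TH^m}$.
\item[(v)] Transport parts of $G$. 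The top-order contribution of $\Lambda_\mp\cdot\nabla$ cancels by the divergence-free condition. The commutators are bounded by $\sup_t\|(u,h)\|_m\|\nabla(u,h)\|^2_{L^2_TH^m}$.
\end{enumerate}
Collecting (i)--(v), the squared left side is
\[
\lesssim \sup_t\|(u,h)\|_m^4+\sup_t\|(u,h)\|_m^2\,\bigl(\sup_t\|(u,h)\|_m+\varepsilon\bigr)\|\nabla(u,h)\|^2_{L^2_TH^m}.
\]
Taking square roots gives \eqref{2510071241}.

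\textbf{Main obstacle: derivative counting.} The second factor must carry a $\nabla$, so $|\beta|=m$. Then the $G_2$ terms involve $\partial^\beta\Lambda_-\cdot\partial^\beta\Delta\Lambda_\pm$, which has $m+2$ derivatives and is beyond $L^2_TH^{m+1}$ control.

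As a first attempt, we would replace $f_{21}$ by an $L^1_{x_1}$-level quantity with one derivative fewer. Concretely, we apply the lemma with $|\beta|\leqslant m-1$. The missing derivative is handled as follows: $x_{\mm{v}}$ derivatives are absorbed into the $H^{m-1}_{x_{\mm{v}}}$ norm, which is allowed because the norm may include all derivatives while $x_1$ is only integrated. For $\partial_1$ derivatives, we use the transport equation, $\partial_1\Lambda_-=-\partial_t\Lambda_-+\dots$.

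A cleaner alternative is to integrate by parts once in the dissipative term to get $-\varepsilon\|\nabla\partial^\beta\Lambda_-\|^2$ plus a boundary-type term. The negative part has a good sign and can be dropped, provided the div-curl inequality only needs an upper bound on $G$. Checking that the signs allow this dropping, and that the $(\mu-\nu)$ cross term can be bounded rather than signed, is the step we expect to be most delicate.
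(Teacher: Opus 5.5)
Your architecture matches the paper's: the div-curl lemma in the single variable $x_1$, pairing a quadratic density at derivative level $\leqslant m$ with one at level $\leqslant m-1$. The paper pairs the energy density of $\partial^\alpha(u,h)$ with the cross density $\int\partial^\gamma u\cdot\partial^\gamma h\,\mathrm{d}x_{\mm{v}}$. That is only a linear recombination of your Els\"asser energies, and it produces both pairings at once.

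Your sign bookkeeping is easily repaired. Put the right-moving density $\|\partial^\beta\Lambda_-\|^2_{L^2_{x_{\mm{v}}}}$ in $f_{11}=f_{12}$ and the left-moving density $\|\partial^\alpha\Lambda_+\|^2_{L^2_{x_{\mm{v}}}}$ in $f_{21}=f_{22}$. Then $f_{11}f_{22}+f_{12}f_{21}=2f_{11}f_{21}$, as you wanted.

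The genuine gap is the step you flag and leave open. This is the top-order viscous contribution $\varepsilon\int_0^T\!\int W\,\partial^\beta\Lambda_-\cdot\Delta\partial^\beta\Lambda_\pm\,\mathrm{d}x\,\mathrm{d}t$, with $|\beta|=m$ and weight $W(x_1,t)=\int_{x_1}^\infty f_{21}\,\mathrm{d}s$.

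\begin{itemize}
\item Your reduction in (ii) to $\|G_1\|_{L^1_{T,x}}$ cannot handle this term.
\item Your first fix fails. The left side of \eqref{2510071241} contains all derivatives of order $m$, including $\partial_1^m$, so $|\beta|=m$ is unavoidable. Trading $\partial_1$ for $\partial_t$ through the transport equation produces densities involving $\partial_t\Lambda_-$. These are neither controlled by the right side nor governed by a transport law of the required form.
\item Your second fix worries about the wrong thing. No sign is needed, since the lemma carries absolute values anyway. The term $\varepsilon\int_0^T\!\int W|\nabla\partial^\beta\Lambda|^2$, and likewise the $(\mu-\nu)$ cross term, is simply bounded by $\varepsilon\sup W\,\|\nabla(u,h)\|^2_{L^2_TH^m}$.
\end{itemize}

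The missing ingredient is what happens when you integrate by parts in $x_1$ against the weight. Since $\partial_1W=-f_{21}$, you get two terms:
\begin{itemize}
\item $-\varepsilon\int W\,\nabla\partial^\beta\Lambda_-:\nabla\partial^\beta\Lambda_\pm$, bounded as above;
\item the interior term $\varepsilon\int_0^T\!\int f_{21}\,\partial^\beta\Lambda_-\cdot\partial_1\partial^\beta\Lambda_\pm\,\mathrm{d}x\,\mathrm{d}t$. This is not a boundary term.
\end{itemize}
The second term is controlled only because the lower-level density is bounded uniformly in $x_1$. By \eqref{2511041525}, $\sup_{x_1}\|\partial^\alpha\Lambda_+\|^2_{L^2_{x_{\mm{v}}}}\lesssim\|\partial^\alpha\Lambda_+\|_0\|\partial^\alpha\Lambda_+\|_1\lesssim\|(u,h)\|_m^2$. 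This uses exactly $|\alpha|\leqslant m-1$, which is why the undifferentiated factor in \eqref{2510071241} is measured only in $H^{m-1}_{x_{\mm{v}}}$. The term is then $\lesssim\varepsilon\sup_t\|(u,h)\|_m^2\,\|\nabla(u,h)\|^2_{L^2_TH^m}$. This is the paper's estimate \eqref{2510082115fsa1} for $I_{5,2}$, and the mirror pairing is handled symmetrically. Your proposal never identifies this $L^\infty_{x_1}$ bound.

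A smaller inaccuracy is in (v). Integrating only over $x_{\mm{v}}$ does not remove the top-order transport term; it leaves $\partial_1\int(\Lambda_+)_1|\partial^\beta\Lambda_-|^2\,\mathrm{d}x_{\mm{v}}$. However, no cancellation is needed there. One factor carries $m+1$ derivatives and the other between $1$ and $m$, and both are controlled by the dissipation, as in \eqref{2602262358}.
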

\begin{pf}
Let multi-indexes $\alpha$ and $\gamma $ satisfy $ |\alpha|\leqslant m$, $ |\gamma |\leqslant m-1$.
Taking the inner products of \eqref{1safa.4}$_1$ resp. \eqref{1safa.4}$_2$  and $\partial^\alpha  u$ resp. 
$\partial^\alpha  h $ in $\mathbb{R}^n$ for given $(x,t)$, and then adding the two resulting identities together, we have
\begin{align}
& \partial_t\left( |\partial^\alpha  u|^2+
|\partial^\alpha  h|^2\right)-  2\partial_1(\partial^\alpha u \cdot \partial^\alpha  h)   = 2\tilde{G}_{1},
\label{25010071750}
\end{align}
where we have defined that
\begin{align*}\tilde{G}_{1}:=&\partial^\alpha( h\cdot \nabla h-u\cdot\nabla u-\nabla q)\cdot\partial^\alpha u+
\partial^\alpha (h\cdot \nabla u -u\cdot \nabla h) \cdot \partial^\alpha  h \\
&  + {\varepsilon}\left(\mu\Delta\partial^\alpha u \cdot \partial^\alpha u+
\nu\Delta\partial^\alpha  h\cdot\partial^\alpha h\right).
\end{align*}

Let 
\begin{align*}
f_{11} = \frac{1}{2}\int_{\mathbb{R}^{n-1}} ({|\partial^\alpha  u|^2 + |\partial^\alpha  h|^2}) {\rm d}{x}_{\mm{v}}\mbox{ and }
f_{12} = - \int_{\mathbb{R}^{n-1}}  
 \partial^\alpha  u \cdot\partial^\alpha  h {\rm d}{x}_{\mm{v}} .
\end{align*}
Integrating the identity \eqref{25010071750} with respect to ${x}_{\mm{v}}$ over $\mathbb{R}^{n-1}$, we get
\begin{align}
&\partial_t f_{11}+ \partial_1f_{12} =G_{1}:=\int_{\mathbb{R}^{n-1}}\tilde{G}_{1}{\rm d}{x}_{\mm{v}}.
\label{2510081920}
\end{align}

Let
\begin{align*}
&f_{21} =\int_{\mathbb{R}^{n-1}} \partial^\gamma  u \cdot \partial^\gamma  h {\rm d}{x}_{\mm{v}}\mbox{ and }f_{22} = \frac{1}{2} \int_{\mathbb{R}^{n-1}} (|\partial^\gamma  u|^2 + |\partial^\gamma  h|^2) {\rm d}{x}_{\mm{v}},\\
&\tilde{G}_{2}=  \partial^\gamma ( h\cdot \nabla h-u\cdot\nabla u-\nabla q)\cdot\partial^\gamma h+
\partial^\gamma (h\cdot \nabla u -u\cdot \nabla h) \cdot \partial^\gamma  u \\
&\qquad   + {\varepsilon}\left(\mu \Delta \partial^\gamma u\cdot \partial^\gamma h+\nu \Delta\partial^\gamma  h\cdot\partial^\gamma u\right).
\end{align*}
 Taking the inner products of \eqref{1safa.4}$_1$, resp. \eqref{1safa.4}$_2$ with $\partial^\gamma  h$ resp. $\partial^\gamma  u$ in $L^2(\mathbb{R}^{n-1})$ for any given $(x_1,t)$,  then adding the two resulting identities together, and finally replacing $\alpha$ by $\gamma $, we get
\begin{align}
\label{25100819201}
&\partial_tf_{21} - \partial_1 f_{22}=G_2:=\int_{\mathbb{R}^{n-1}}\tilde{G}_{2}{\rm d}{x}_{\mm{v}} .
\end{align}

By a simple computation, we have
\begin{align*}
& \|\partial^\gamma  \Lambda_+\|^2_{L^2_{{x}_{\mm{v}}}} \|\partial^\alpha  \Lambda_-\|^2_{L^2_{{x}_{\mm{v}}}}
+  \|\partial^\gamma  \Lambda_-\|^2_{L^2_{{x}_{\mm{v}}}}\|\partial^\alpha  \Lambda_+\|^2_{L^2_{{x}_{\mm{v}}}}\\
=&2\int_{\mathbb{R}^{n-1}} ({|\partial^\alpha  u|^2 + |\partial^\alpha  h|^2}){\rm d}{x}_{\mm{v}}\int_{\mathbb{R}^{n-1}} ({|\partial^\gamma  h|^2 + |\partial^\gamma  u|^2}) {\rm d}{x}_{\mm{v}}\nonumber \\
& -8 \int_{\mathbb{R}^{n-1}} \partial^\alpha  u \cdot \partial^\alpha  h {\rm d}{x}_{\mm{v}}\int_{\mathbb{R}^{n-1}} \partial^\gamma  u \cdot \partial^\gamma  h {\rm d}{x}_{\mm{v}}. \end{align*}
Using the above identity, we  further compute out that 
\begin{align*}
&f_{11}f_{22} + f_{12}f_{21} \\
&= \frac{1}{4}\int_{\mathbb{R}^{n-1}}(|\partial^\alpha  u|^2 + |\partial^\alpha  h|^2) {\rm d}{x}_{\mm{v}} \int_{\mathbb{R}^{n-1}} (|\partial^\gamma  h|^2 + |\partial^\gamma  u|^2){\rm d}{x}_{\mm{v}}
  \nonumber \\
&\quad  -\int_{\mathbb{R}^{n-1}} \partial^\alpha  u \cdot \partial^\alpha  h {\rm d}{x}_{\mm{v}}\int_{\mathbb{R}^{n-1}} \partial^\gamma  u \cdot \partial^\gamma  h{\rm d}{x}_{\mm{v}}\\ 
& =\frac{1}{8}\left(\|\partial^\gamma  \Lambda_-\|^2_{L^2_{{x}_{v}}}\|\partial^\alpha  \Lambda_+\|^2_{L^2_{{x}_{v}}} + \|\partial^\gamma  \Lambda_+\|^2_{L^2_{{x}_{v}}}\|\partial^\alpha  \Lambda_-\|^2_{L^2_{{x}_{v}}} \right) .
\end{align*} 

Applying Lemma \ref{2510081319} to \eqref{2510081920} and \eqref{25100819201}, and then using the above identity, we obtain
\begin{align}
 & \frac{1}{16}\left(\| \|\partial^\gamma  \Lambda_+\|_{L^2_{{x}_{\mm{v}}}} \|\partial^\alpha  \Lambda_-
 \|_{L^2_{{x}_{\mm{v}}}} + \|\partial^\gamma  \Lambda_-\|_{L^2_{x_{\mm{v}}}}\|\partial^\alpha  \Lambda_+\|_{L^2_{x_{\mm{v}}}} \| 
^2_{L^2_TL^2_{x_1}} \right)\nonumber \\ \label{20251saf0062217}
&\leqslant \int_0^T \int_{\mathbb{R}} (f_{11} f_{22} + f_{12} f_{21}) {\rm d}x_1{\rm d}t  \leqslant  \sum_{j=1}^{3}|I_{2+j}|,
\end{align}
where we have defined that 
\begin{align}
&I_{3}:= 2\sup_{0 \leqslant t \leqslant T} \{\|f_{11}(t)\|_{L^1(\mathbb{R})} \|f_{21}(t)\|_{L^1 (\mathbb{R})}\}, \nonumber \\ 
& I_{4}:=\int_0^T \int_{\mathbb{R}} \left(\int_{-\infty}^{x_1} f_{11}(s,t) {\rm d}s\right) G_2(x_1,t){\rm d}x_1{\rm d}t,   \nonumber \\
& I_{5}:= \int_0^T \int_{\mathbb{R}} \left(\int_{x_1}^{+\infty} f_{21}(s,t) {\rm d}s\right) G_1(x_1,t){\rm d}x_1{\rm d}t. \nonumber 
\end{align}
Next we estimate for the above three integrals in sequence.
 
(1) Using H\"older's inequality, it's obvious that
\begin{align}
I_3\lesssim &  \sup_{0 \leqslant t \leqslant T}\{\| \partial^\alpha (u,h) (t)\|_{0}^2  \| \partial^\gamma  h(t)\|_{0} \| \partial^\gamma  u(t) \|_{0}\}.
\label{25100082108}
\end{align}
 
(2)   We split $I_4$ into two terms
\begin{align}\label{251002513}
 I_4=I_{4,1}+ {\varepsilon} I_{4,2}  
 , \end{align}
 where we have defined that 
\begin{align*}
I_{4,1}:= &\frac{1}{2}\int_0^T \int_{\mathbb{R}} \left(\int_{-\infty}^{x_1} \int_{\mathbb{R}^{n-1}} ({|\partial^\alpha  u(s,x_{\mm{v}},t)|^2 + |\partial^\alpha  h(s,x_{\mm{v}},t)|^2}) {\rm d}x_{\mm{v}}  {\rm d}s\right) 
 \int_{\mathbb{R}^{n-1}}(\partial^\gamma ( h\cdot \nabla h\\
  & -u\cdot\nabla u-\nabla q)\cdot\partial^\gamma h +
\partial^\gamma (h\cdot \nabla u -u\cdot \nabla h) \cdot \partial^\gamma u
 ){\rm d}x_{\mm{v}}{\rm d}x_1{\rm d}t 
 \end{align*}
 and 
\begin{align*}
 I_{4,2}:=&\frac{1}{2} \int_0^T \int_{\mathbb{R}} \left(\int_{-\infty}^{x_1} \int_{\mathbb{R}^{n-1}} ({|\partial^\alpha  u(s,x_{\mm{v}},t)|^2 + |\partial^\alpha  h(s,x_{\mm{v}},t)|^2}) {\rm d}x_{\mm{v}}  {\rm d}s\right)\int_{\mathbb{R}^{n-1}} (\mu \Delta \partial^\gamma u\cdot \partial^\gamma h\\
&+\nu \Delta\partial^\gamma  h\cdot\partial^\gamma u ){\rm d}x_{\mm{v}}   {\rm d}x_1{\rm d}t .\end{align*}

From now on, we assume $ |\alpha| >0$. 
Exploiting \eqref{2510081449} and the product estimate \eqref{2511052109},
we have
\begin{align}
|I_{4,1}|\lesssim  &\sup_{0\leqslant t\leqslant T} \{\|\partial^\alpha  (u,h)(t)\|_0 \|\partial^\gamma (u,h)(t)\|_0\}
\int_0^T \|\partial^\alpha  (u,h)\|_0(\|\partial^\gamma ( h\cdot \nabla h   \nonumber \\
&-u\cdot\nabla u-\nabla q) \|_0+
\|\partial^\gamma (h\cdot \nabla u -u\cdot \nabla h)\|_0)\mm{d}t\nonumber \\
 \lesssim  & \sup_{0 \leqslant t \leqslant T}\{\|  (u,h)(t) \|_{m}^3\}  \| \nabla (u,h) \|_{L^2_TH^{m}}^2.
\label{2602262358}
 \end{align}
Similarly, \begin{align*}
|I_{4,2}| 
\lesssim  & \sup_{0 \leqslant t \leqslant T}\{\|  (u,h) (t)\|_{m}^2\}
 \| \nabla (u,h) \|_{L^2_TH^{m}}^2 .
 \end{align*}
Thanks to the above two estimates, we deduce from \eqref{251002513} that
\begin{align}
&|I_4|\lesssim \sup_{0 \leqslant t \leqslant T}\{\|  (u,h)(t) \|_{m}^2\}
\left(  \sup_{0 \leqslant t \leqslant T}\{\|  (u,h) (t)\|_{m}\} + {\varepsilon}\right) \| \nabla (u,h) \|_{L^2_TH^{m}}^2 . 
\label{2510082113}
\end{align} 

(3) Similarly to \eqref{2602262358}, 
 we also split $I_5$ into two terms
\begin{align}\label{2510025131}
 I_5=I_{5,1}+ {\varepsilon} I_{5,2}  
 , \end{align}
 where we have defined that 
\begin{align*}
I_{5,1}:= & \int_0^T \int_{\mathbb{R}} \left(\int_{x_1}^{+\infty} \int_{\mathbb{R}^{n-1}}(\partial^\gamma u\cdot \partial^\gamma h)(s,x_{\mm{v}},t){\rm d}x_{\mm{v}}{\rm d}s\right) 
 \int_{\mathbb{R}^{n-1}}(\partial^\alpha( h\cdot \nabla h \\
& -u\cdot\nabla u-\nabla q)\cdot\partial^\alpha u +
\partial^\alpha (h\cdot \nabla u -u\cdot \nabla h) \cdot \partial^\alpha  h ) {\rm d}x_{\mm{v}}{\rm d}x_1{\rm d}t 
\end{align*}
and 
\begin{align*}
I_{5,2}:=& \int_0^T \int_{\mathbb{R}} \left(\int_{x_1}^{\infty} \int_{\mathbb{R}^{n-1}}(\partial^\gamma u\cdot \partial^\gamma h)(s,x_{\mm{v}},t) {\rm d}x_{\mm{v}}  {\rm d}s\right)\int_{\mathbb{R}^{n-1}} (\mu\Delta\partial^\alpha u \cdot \partial^\alpha u\\
&+\nu\Delta\partial^\alpha  h\cdot\partial^\alpha h ){\rm d}x_{\mm{v}}   {\rm d}x_1{\rm d}t.\end{align*}
Following the argument of \eqref{2602262358}, we also have
 \begin{align}
|I_{5,1}| 
 \lesssim  \sup_{0 \leqslant t \leqslant T}\{\|  (u,h)(t) \|_{m}^3\} 
   \| \nabla (u,h) \|_{L^2_TH^{m}}^2  .
\label{2510082115fsa}
\end{align} 

 It is easy to check that
\begin{align}
\label{2511041525}
\|\|\phi\|_{L^2_{x_\mm{v}}}\|_{L^\infty_{x_1}}\lesssim \|\|\phi\|_{L^\infty_{x_1}} \|_{L^2_{x_\mm{v}}} \lesssim \|(\|\phi\|_{L^2_{x_1}}\|\partial_1 \phi\|_{L^2_{x_1}})^{1/2} \|_{L^2_{x_\mm{v}}}\lesssim (\|\phi\|_{0}\|\phi \|_1)^{1/2},
\end{align}
provided that the right side is bounded.
Exploiting \eqref{2511041525} and the integration by parts, we easily estimate that
\begin{align}
|I_{5,2}|\le & \ \mu \left|\int_0^T \int_{\mathbb{R}}
 \int_{\mathbb{R}^{n-1}} \left( \int_{x_1}^{+\infty} \int_{\mathbb{R}^{n-1}}(\partial^\gamma u\cdot \partial^\gamma h)(s,y_{\mm{v}},t){\rm d}y_{\mm{v}}{\rm d}s \, \partial^\alpha \nabla u:\partial^\alpha\nabla  u \right.\right. \nonumber \\
&\left.\left.- \int_{\mathbb{R}^{n-1}} (\partial^\gamma u\cdot \partial^\gamma h)(x_1, y_{\mm{v}}, t)  {\rm d}y_{\mm{v}} \,
\partial_1 \partial^\alpha u \cdot \partial^\alpha  u\right)  {\rm d}x_{\mm{v}}{\rm d}x_1{\rm d}t \right| \nonumber \\
& + \nu \left|\int_0^T \int_{\mathbb{R}}
 \int_{\mathbb{R}^{n-1}} \left( \int_{x_1}^{+\infty} \int_{\mathbb{R}^{n-1}}(\partial^\gamma u\cdot \partial^\gamma h)(s,y_{\mm{v}},t){\rm d}y_{\mm{v}}{\rm d}s \, \partial^\alpha \nabla h:\partial^\alpha\nabla  h \right.\right. \nonumber \\
&\left.\left.- \int_{\mathbb{R}^{n-1}} (\partial^\gamma u\cdot \partial^\gamma h)(x_1, y_{\mm{v}}, t)  {\rm d}y_{\mm{v}} \,
\partial_1 \partial^\alpha h \cdot \partial^\alpha  h\right)  {\rm d}x_{\mm{v}}{\rm d}x_1{\rm d}t \right| \nonumber \\
\lesssim &\left(\sup_{0 \leqslant t \leqslant T}\{\|  (u,h)(t) \|_{m}^2\} + 
\sup_{0 \leqslant t \leqslant T}\{\|\|\partial^\gamma u(t)\|_{L^2_{x_\mm{v}}}\|\partial^\gamma h(t)\|_{L^2_{x_\mm{v}}}\|_{L^\infty_{x_1}}\} \right)\| \nabla (u, h) \|_{L^2_TH^{m}}^2\nonumber \\
 \lesssim &\sup_{0 \leqslant t \leqslant T}\{\|  (u,h)(t) \|_{m}^2\} 
   \| \nabla (u, h) \|_{L^2_TH^{m}}^2  .
\label{2510082115fsa1}
\end{align}   
Thanks to the two estimates \eqref{2510082115fsa} and \eqref{2510082115fsa1}, we deduce from \eqref{2510025131} that 
 \begin{align}
&|I_5|\lesssim \sup_{0 \leqslant t \leqslant T}\{\|  (u,h) (t)\|_{m}^2\}
\left(  \sup_{0 \leqslant t \leqslant T}\{\|  (u,h) (t)\|_{m}\}
+  {\varepsilon} \right)\| \nabla (u,h) \|_{L^2_TH^{m}}^2.
\label{25100dsfa82115}
\end{align} 
Finally, putting \eqref{25100082108}, \eqref{2510082113} and \eqref{25100dsfa82115} into \eqref{20251saf0062217} yields \eqref{2510071241}. The proof is complete. \hfill$\Box$
\end{pf}

\subsection{A stability estimate} 

It follows from Lemmas \ref{2511041741} and \ref{2511041742} that 
\begin{align*} 
& \sup_{0 \leqslant t \leqslant T}\{\|(u, h)(t)\|_{m}^2\} 
+ \varepsilon\| \nabla (u,h) \|_{L^2_TH^{m}}^2 \nonumber 
\\
& \lesssim \varepsilon^2  \|(v^0,H^0)\|_0^2+\left\|\| \nabla \Lambda_+\|_{H^{m-2}_{{x}_{\mm{v}}}} \| \nabla \Lambda_-\|_{H^{m-1}_{{x}_{\mm{v}}}}\right.\nonumber \\
&\left.\qquad + \| \nabla \Lambda_-\|_{H^{m-2}_{{x}_{\mm{v}}}} \| \nabla \Lambda_+\|_{H^{m-1}_{{x}_{\mm{v}}}} \right\|_{L^2_{T,x_1}} \|  \nabla (u, h)\|_{L^2_TH^{m-1}}\nonumber 
\\
&\lesssim \varepsilon^2  \|(v^0,H^0)\|_0^2 +  \sup_{0 \leqslant t \leqslant T}\{\|  (u,h)(t) \|_{m}\}\| \nabla (u,h) \|_{L^2_TH^{m}}
\Bigg( \sup_{0 \leqslant t \leqslant T}\{\|  (u,h)(t) \|_{m}\} \\
&\quad + \left(\sup_{0 \leqslant t \leqslant T}\{\|  (u,h) \|_{m}\}+
 {\varepsilon} \right)^{1/2}\| \nabla (u,h) \|_{L^2_TH^{m}}\Bigg) .
\end{align*}
In particular,  under the \emph{a priori} assumptions \eqref{2510081829} and \eqref{2511042007}, we deduce \eqref{11061426} from the above estimate.  

Based on the stability estimate \eqref{11061426} under the \emph{a priori} assumptions \eqref{2510081829} and \eqref{2511042007}, we can easily extend the 
unique local solutions of the Cauchy problem \eqref{1.4} to the unique global ones, and thus obtain Theorem \ref{thm1}, please refer to \cite{MR4641656} for the extension of solutions with respect to time. This completes the proof of Theorem \ref{thm1}.

\section{Proof of Theorem \ref{thma2}}\label{sec3}

This section is devoted to the proof of Theorem \ref{thma2}. Let $(u,h)$ be the global solution in Theorem \ref{thm1}, and 
$(u^\mm{L},h^\mm{L})$ the unique classical solution to the linear  problem \eqref{1.11} stated in Theorem \ref{thma2}, and  $(u^{\mm{d}},h^{\mm{d}})=(u,h)-(u^\mm{L},h^\mm{L})$. Then $(u^{\mm{d}},h^{\mm{d}})$ satisfies the following error problem
\begin{equation}\label{1asdf.4}
\begin{cases}
u_t^{\mm{d}}- \varepsilon {\mu}\Delta u^{\mm{d}}+\nabla q =\partial_1 h^{\mm{d}}+ h  \cdot \nabla h-   u\cdot\nabla u , \\[1mm]
h_t^{\mm{d}}-\varepsilon\nu\Delta h^{\mm{d}}=  \partial_1v^{\mm{d}}+h\cdot\nabla u-u\cdot\nabla h,\\[1mm]
\mathrm{div}u^{\mm{d}}=\mathrm{div}h^{\mm{d}}=0,\\[1mm]
u^{\mm{d}}|_{t=0}=h^{\mm{d}}|_{t=0}=0.
\end{cases}
\end{equation}
In addition, we easily see from the linear problem \eqref{1.11} that
\begin{align} 
\sup_{0\leqslant t<\infty} \|(u^\mm{L},h^\mm{L})(t)\|_m + \sqrt{\varepsilon}\|\nabla (u^\mm{L},h^\mm{L})\|_{L^2_\infty H^{m}} \lesssim \varepsilon \|(v^0,H^0)\|_m. \label{2511061414}
\end{align} 

Following the argument of \eqref{2510081145}, we deduce from \eqref{1asdf.4} that
\begin{align}
&\frac{1}{2}\frac{\rm d}{{\rm d}t} \left( \|\partial^\alpha  u^{\mm{d}}\|_{0}^2 + \|\partial^\alpha  h^{\mm{d}}\|_{0}^2 \right)
+ \varepsilon(\mu \|\nabla \partial^\alpha  u\|_{0}^2 + \nu \|\nabla \partial^\alpha  h\|_{0}^2)\nonumber  \\ 
= &  \sum_{ \gamma \leqslant\alpha  } \frac{C^\alpha_{\alpha-\gamma }}{2}\int
(( 
\partial^\gamma  \Lambda_+\cdot\nabla \partial^{\alpha-\gamma }\Lambda_--\partial^\gamma  \Lambda_-\cdot\nabla \partial^{\alpha-\gamma }\Lambda_+)\cdot \partial^\alpha h^{\mm{d}}
 \nonumber \\
& -(
\partial^\gamma  \Lambda_+\cdot\nabla \partial^{\alpha-\gamma }\Lambda_-+\partial^\gamma  \Lambda_-\cdot\nabla \partial^{\alpha-\gamma }\Lambda_+)\cdot \partial^\alpha u^{\mm{d}}){\rm d}x =:I_6, \label{25100811saf45}
\end{align}
where $0<|\alpha|< m$.

Following the argument of \eqref{2511031920} with slight modification, we can estiamte that 
 \begin{align*}
I_6\lesssim    \left\|\|  \Lambda_+\|_{H^{m-1}_{{x}_\mm{v}}} \| \nabla \Lambda_-\|_{H^{m-1}_{{x}_\mm{v}}}+ \|  \Lambda_-\|_{H^{m-1}_{{x}_\mm{v}}} \| \nabla \Lambda_+\|_{H^{m-1}_{{x}_\mm{v}}}\right\|_{L^2_{x_1}}\| \nabla (u^{\mm{d}}, h^{\mm{d}})\|_{{m-1}}. 
\end{align*}  
Consequently, inserting the above estimate into \eqref{25100811saf45}, then integrating the resulting inequality over $(0,\infty)$, and finally using the initial condition \eqref{1asdf.4}$_4$, we arrive at 
\begin{align*} 
&  \sup_{0 \leqslant t<\infty}\{\|\nabla(u^{\mm{d}}, h^{\mm{d}})(t)\|_{m-2}^2 \}
+\varepsilon \|\nabla^2  (u^{\mm{d}}, h^{\mm{d}})\|_{L^2_\infty H^{m-2}}^2  \nonumber 
\\
& \lesssim   \left\|\|  \Lambda_+\|_{H^{m-1}_{{x}_\mm{v}}} \| \nabla \Lambda_-\|_{H^{m-1}_{{x}_\mm{v}}}+ \|  \Lambda_-\|_{H^{m-1}_{{x}_\mm{v}}} \| \nabla \Lambda_+\|_{H^{m-1}_{{x}_\mm{v}}}\right\|_{L^2_{T,x_1}}\| \nabla (u^{\mm{d}}, h^{\mm{d}})\|_{L^2_\infty H^{m-1}}
\end{align*}
 Inserting \eqref{2510071241} into the above estimate, and then using \eqref{2504160945}, \eqref{2511061414}, we have
\begin{align*}
& \sup_{0 \leqslant t<\infty}\{\|\nabla(u^{\mm{d}}, h^{\mm{d}})(t)\|_{m-2}^2\}  
+ \varepsilon\|\nabla^2  (u^{\mm{d}}, h^{\mm{d}})\|_{L^2_\infty H^{m-2}}^2  \nonumber 
\\
&\lesssim  \sup_{0 \leqslant t<\infty}\{\|  (u,h)(t) \|_{m}\}\| \nabla (u,h) \|_{L^2_\infty H^{m}}
\Bigg(\sup_{0 \leqslant t<\infty}\{\|  (u,h)(t) \|_{m}\nonumber \\
&+\left(\sup_{0 \leqslant t<\infty}\{\|  (u,h)(t) \|_{m}\}+
\varepsilon \right)^{1/2}\| \nabla (u^{\mm{d}}, h^{\mm{d}})\|_{L^2_\infty H^{m-1}}\Bigg)\nonumber \\
&\lesssim \varepsilon^{5/2}(\|(v^0,H^0)\|_m^{3}+\|(v^0,H^0)\|_m^{ 7/2}),
\end{align*}
which yields \eqref{1.12}. This completes the proof of Theorem \ref{thma2}.
 
\section{Proof of Theorem \ref{tsafasdhmdsfa1}}   
We will divide the proof of Theorem \ref{tsafasdhmdsfa1} by two steps, which are recorded in the following two sections, resp..

\subsection{An integral formula for linear solutions}
Obviously, the linear problem \eqref{2602102212} can be rewritten as follows
\begin{equation}
\begin{cases}
\mathbf{U}_t^\mm{L}= \mathcal{L} \mathbf{U}^\mm{L}, \\
\mathbf{U}^\mm{L}|_{t=0}=\mathbf{U}^0 ,
\end{cases}
\label{2602102256}
\end{equation}
where we have defined that 
$$\mathbf{U}^\mm{L}=\left(\begin{matrix}
v^\mm{L} \\
H^\mm{L}\\
\end{matrix} \right),\ \mathcal{L} =\left(\begin{matrix}
\mu \Delta & \varepsilon^{-1} \partial_1 \\
\varepsilon^{-1} \partial_1 &  \nu \Delta
\end{matrix}\right)\mbox{ and } \mathbf{U}^0=\left(\begin{matrix}
v^0 \\
H^0\\
\end{matrix} \right),$$

We denote the Fourier transformation of a function $f$ by $\widehat{f}$ and the $n\times n$ unit identity matrix by $\mathbb{I}_n$. Applying Fourier transformation to the linear problem \eqref{2602102256}, thus there holds
\begin{equation}
    \begin{cases}
       \widehat{\mathbf{U}}_t = \mathbf{A} \widehat{\mathbf{U}},\\
        \widehat{\mathbf{U}}|_{t=0}= \widehat{\mathbf{U}^0},
    \end{cases}
    \label{2602111}
    \end{equation}
where we have defined that
\begin{equation*}
\mathbf{A}:=\left(\begin{matrix}
-\mu |\xi|^2 \mathbb{I}_n& \mm{i} \varepsilon^{-1} \xi_1 \mathbb{I}_n \\
\mm{i}\varepsilon^{-1} \xi_1  \mathbb{I}_n&  -\nu |\xi|^2  \mathbb{I}_n
        \end{matrix}\right)\mbox{ with }\mm{i}^2=-1.
\end{equation*}
We have the following conclusion for the initial value problem above.
\begin{lem} \label{lem:pointwise}
We define that $\delta:=((\mu-\nu)|\xi|^2)^2-4(\varepsilon^{-1}\xi_1)^2$ for given $\xi \in \mathbb{R}^n$,
\begin{align}
  \psi(s):=\begin{cases}
 s^{-1}\sinh s&\mbox{for }s\neq 0;\\
 1&\mbox{for } s=0, \end{cases}\ 
\tilde{\Lambda}:=\begin{cases}
    |\sqrt{\delta }|&\mbox{for }\delta\geqslant 0;\\
    \mm{i}|\sqrt{-\delta }|&\mbox{for }\delta<0
    \end{cases}\mbox{ and } \Lambda:= t\tilde{\Lambda}.
 \label{2602121600}
 \end{align}
Then the solution $\widehat{\mathbf{U}}$ of \eqref{2602111} is uniquely given as follows
\begin{equation}
    \widehat{\mathbf{U}}=\mathbf{B}\widehat{\mathbf{U}^0},
    \label{26021113041}
\end{equation}
where we have defined that
\begin{align} \label{2602121619}
\mathbf{B}:=\left(
 \begin{matrix}
  b_{11}\mathbb{I}_n & b_{12} \mathbb{I}_n  \\
  b_{21}  \mathbb{I}_n &  b_{22}  \mathbb{I}_n
 \end{matrix}\right)
 \end{align}
  and   
  \begin{align*}\begin{cases}
   b_{11} := e^{-(\mu+\nu)t|\xi|^2/{2}}( \cosh(\Lambda/2)-(\mu-\nu)t|\xi|^2\psi(\Lambda/2)/2),\\
    b_{12}:= b_{21} =  \mm{i}\varepsilon^{-1} t\xi_1 e^{-(\mu+\nu)t|\xi|^2/{2}}\psi(\Lambda/2) ,\\
    b_{22}:= e^{-(\mu+\nu)t|\xi|^2 /{2}} (\cosh(\Lambda/2)+(\mu-\nu)t|\xi|^2\psi(\Lambda/2)/2).
    \end{cases}
\end{align*}
Moreover each entry $b_{ij}$ in \eqref{2602121619} satisfies
\begin{equation}
\label{2602121346}
|b_{ij}(\xi)| \lesssim e^{-\min\{\mu, \nu\}t |\xi|^2} (t|\xi|^2+1)\mbox{ where }i,\ j \in \{1,2\}.
\end{equation}
\end{lem}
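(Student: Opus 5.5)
Since $\mathbf{A}$ acts diagonally in blocks of $\mathbb{I}_n$, the system \eqref{2602111} decouples into $n$ identical $2\times2$ ODE systems
\[
\frac{\mathrm{d}}{\mathrm{d}t}\begin{pmatrix} a\\ b\end{pmatrix}=M\begin{pmatrix} a\\ b\end{pmatrix},\qquad M:=\begin{pmatrix}-\mu|\xi|^2 & \mathrm{i}\varepsilon^{-1}\xi_1\\ \mathrm{i}\varepsilon^{-1}\xi_1 & -\nu|\xi|^2\end{pmatrix}.
\]
It therefore suffices to compute $e^{tM}$ for fixed $\xi$; uniqueness is standard linear ODE theory. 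My plan is to write $M=-\frac{(\mu+\nu)|\xi|^2}{2}\mathbb{I}_2+N$ with
\[
N=\begin{pmatrix}-\frac{(\mu-\nu)|\xi|^2}{2} & \mathrm{i}\varepsilon^{-1}\xi_1\\ \mathrm{i}\varepsilon^{-1}\xi_1 & \frac{(\mu-\nu)|\xi|^2}{2}\end{pmatrix}.
\]
By Cayley--Hamilton, $N^2=\frac{(\mu-\nu)^2|\xi|^4}{4}-\varepsilon^{-2}\xi_1^2=\delta/4=(\tilde\Lambda/2)^2$ times $\mathbb{I}_2$. Hence
\[
e^{tN}=\cosh(t\tilde\Lambda/2)\,\mathbb{I}_2+t\,\psi(t\tilde\Lambda/2)\,N.
\]
This formula is valid whether $\tilde\Lambda$ is real or purely imaginary, because $\cosh$ and $\psi$ are even entire functions and so depend only on $\tilde\Lambda^2=\delta$. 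Multiplying by the scalar factor $e^{-(\mu+\nu)t|\xi|^2/2}$ gives exactly $b_{11},b_{12}=b_{21},b_{22}$, with $\Lambda=t\tilde\Lambda$. This establishes \eqref{26021113041}.

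For \eqref{2602121346} I split into two cases. If $\delta<0$, then $\Lambda/2=\mathrm{i}\theta$ is imaginary, so $|\cosh(\Lambda/2)|=|\cos\theta|\le 1$ and $|\psi(\Lambda/2)|=|\sin\theta/\theta|\le1$. The diagonal entries are then bounded by $e^{-(\mu+\nu)t|\xi|^2/2}(1+|\mu-\nu|t|\xi|^2/2)$. For the off-diagonal entries I do not use the factor $\varepsilon^{-1}t|\xi_1|$, which is not uniformly bounded. Instead I write $\varepsilon^{-1}t\xi_1\psi(\Lambda/2)=\varepsilon^{-1}\xi_1\sin\theta/(|\tilde\Lambda|/2)$. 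Since $|\tilde\Lambda|^2=4\varepsilon^{-2}\xi_1^2-(\mu-\nu)^2|\xi|^4$, this quotient is bounded by $1$ only when $|\tilde\Lambda|\gtrsim\varepsilon^{-1}|\xi_1|$, so the near-degenerate regime needs separate treatment. There I use the alternative bound $|\sin\theta/\theta|\le1$ together with $\varepsilon^{-1}|\xi_1|\le|\mu-\nu||\xi|^2/2$ (which holds since $\delta<0$ forces $4\varepsilon^{-2}\xi_1^2>(\mu-\nu)^2|\xi|^4$, with the two sides comparable near degeneracy). Combining, in all of the $\delta<0$ case I get $|\varepsilon^{-1}t\xi_1\psi|\lesssim 1+t|\xi|^2$ up to harmless adjustment. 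The cleanest route is to split according to whether $\varepsilon^{-1}|\xi_1|\le |\mu-\nu||\xi|^2$ or not: in the first subcase use $|\psi|\le1$, in the second use $|\tilde\Lambda|\ge\sqrt3\,\varepsilon^{-1}|\xi_1|$.

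If $\delta\ge0$, then $0\le\tilde\Lambda\le|\mu-\nu||\xi|^2$ and $\varepsilon^{-1}|\xi_1|\le|\mu-\nu||\xi|^2/2$. Using $\cosh s\le e^{s}$ and $\psi(s)\le e^{s}$ for $s\ge0$, each entry is bounded by
\[
e^{-(\mu+\nu)t|\xi|^2/2+|\mu-\nu|t|\xi|^2/2}(1+t|\xi|^2)=e^{-\min\{\mu,\nu\}t|\xi|^2}(1+t|\xi|^2),
\]
which is exactly the claimed decay rate.

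The main obstacle is getting the off-diagonal bound uniform in $\varepsilon$ in the oscillatory regime $\delta<0$. There the prefactor $\varepsilon^{-1}t\xi_1$ must be absorbed by $\psi$ through the $1/|\tilde\Lambda|$ decay of $\sin\theta/\theta$, and the near-degenerate set $\delta\approx0$ has to be handled by the two-subcase split above. In the $\delta<0$ case the exponent $(\mu+\nu)/2\ge\min\{\mu,\nu\}$ already suffices, so no exponential loss occurs there.
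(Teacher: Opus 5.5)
Your argument is correct and is essentially the paper's: the formula comes from $N^2=(\delta/4)\mathbb{I}_2$, which is the paper's $\mathbf{C}^2=\delta\,\mathbb{I}_{2\ell}/4$ in the appendix. For the off-diagonal entries, your split $\varepsilon^{-1}|\xi_1|\le|\mu-\nu||\xi|^2$ versus $|\tilde\Lambda|\geqslant\sqrt3\,\varepsilon^{-1}|\xi_1|$ is, up to constants, the paper's split $\varepsilon^{-1}|\xi_1|>|\tilde\Lambda|$ versus $\varepsilon^{-1}|\xi_1|\le|\tilde\Lambda|$; the paper just treats both signs of $\delta$ at once via $|\psi(s)|\lesssim e^{|\mathrm{Re}\,s|}$.

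Delete one slip: the parenthetical claim that $\delta<0$ gives $\varepsilon^{-1}|\xi_1|\le|\mu-\nu||\xi|^2/2$ is backwards, since $\delta<0$ means $2\varepsilon^{-1}|\xi_1|>|\mu-\nu||\xi|^2$. Your final two-subcase split does not rely on it.
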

\begin{rem} Here $\sinh s$ and $\cosh s$ represent the hyperbolic sine and the hyperbolic cosine functions rep.,
i.e.
$$\sinh s=\frac{e^s-e^{-s}}{2}\mbox{ and }\cosh s=\frac{e^s+e^{-s}}{2}.$$
\end{rem}
\begin{proof}  Obviously the solution $\widehat{\mathbf{U}}$ of \eqref{2602111} is uniquely given by the formula \eqref{26021113041} by Lemma \ref{2602121343}. Next we deduce the estimates in \eqref{2602121346}.

It is easy to check that
\begin{align}
\label{2602122231}
|\psi(s)|\lesssim e^{|\mm{Re} s|}\mbox{ for any }s\in \mathbb{C}.
\end{align}
In addition, by the definition of $\tilde{\Lambda}$, clearly
\begin{equation*} 
|\mm{Re} \, \tilde{\Lambda}| \leqslant |\mu-\nu||\xi|^2.
\end{equation*} 
Thanks to the above two estimates, we easily get
\begin{align*}
|b_{ii}(\xi)|&\lesssim  e^{-t(\mu+\nu)|\xi|^2/{2}} (|\cosh(\Lambda/2)|+|\psi(\Lambda/2)|)(t|\xi|^2+1 )
\\
&\lesssim e^{-t(\mu+\nu)|\xi|^2/{2} + |\mm{Re}\, \Lambda|/2} (t|\xi|^2+1 )\lesssim  e^{(|\mu-\nu|-(\mu+\nu))t|\xi|^2/2}(t|\xi|^2+1 )\\
&\lesssim  e^{-t\min\{\mu, \nu\}|\xi|^2}(t|\xi|^2+1 ),  
\end{align*}
which immediately yields \eqref{2602121346} for the case $i=j$.

Now we turn to estimating for $b_{ij}$ with $i\neq j$. Firstly, there exists an absolute constant $c\geqslant 1$ such that 
\begin{align}\varepsilon^{-1} |\xi_1|\leqslant c |\mu-\nu| |\xi|^2\leqslant (c \varepsilon^{-1} |\xi_1|)^2
\label{2602122228}
\end{align} 
for any $\varepsilon^{-1}$ and for any  $\xi$ satisfying   
\begin{align*}
\varepsilon^{-1} |\xi_1| >  |\tilde{\Lambda}|.
\end{align*}  
Exploiting \eqref{2602122231} and the first inequality in \eqref{2602122228}, we have 
\begin{equation}
\label{2602121955}
|b_{ij}|\lesssim t|\xi|^2 e^{-t(\mu+\nu)|\xi|^2/{2} + |\mm{Re}\, \Lambda|/2} \lesssim t|\xi|^2  e^{-t\min\{\mu, \nu\} |\xi|^2}
\end{equation}  for any $\xi$ satisfying  $\varepsilon^{-1}|\xi_1|>   |\tilde{\Lambda}|$.
In addition, we easily estimate that
\begin{equation}
\label{02121956}
|b_{ij}| \lesssim e^{-t(\mu+\nu)|\xi|^2/{2}} |\sinh(\Lambda/2)| \lesssim  e^{-t\min\{\mu, \nu\} |\xi|^2}
\end{equation}
for any $\varepsilon^{-1}|\xi_1| \leqslant  |\tilde{\Lambda}|$.
In view of \eqref{2602121955} and \eqref{02121956}, we see that \eqref{2602121346} holds for $i\neq j$. This completes the proof of Lemma \ref{lem:pointwise}.
\end{proof}

We denote the Fourier inverse transformation of a function $f(\xi)$ by $\mathcal{F}^{-1}({f}(\xi))$, and then define
$$\mathbf{K}(x,t):=\mathcal{F}^{-1}(\mathbf{B}(\xi,t)),$$
which is called the fundamental solution of the linear equations in \eqref{2602102256}$_1$.
Applying Fourier inverse transformation to \eqref{26021113041}, we obtain the integral formula for linear solutions of \eqref{2602102256}:
\begin{equation}
\label{2602250006}
    {\mathbf{U}}^{\mm{L}}(x,t) =\int\mathbf{K}(y,t) \mathbf{U}^0(x-y) \mm{d}y.
\end{equation}
In particular,  for the initial data $(v^0, H^0)$ satisfying its support being a subset of the slab $S_R$, 
\begin{equation}\label{2602250007}
\|{\mathbf{U}}^{\mm{L}}(x,t)\|_{L^\infty(S_{l R})}\lesssim \|\mathbf{K}(y,t)\|_{L^\infty(S_{(l+1) R})}\|\mathbf{U}^0\|_{L^1}.
\end{equation} 
To obtain Theorem \ref{tsafasdhmdsfa1}, we shall estimate for the fundamental solution $\mathbf{K}$.

\subsection{Estimate for the fundamental solution}

For any given $\theta\in (0,1)$, there exist two constants $d_1$, ${d_2} \in (0,1)$ such that
\begin{equation}
\label{2602111142}
\theta=d_2-d_1(n-1)\mbox{ and } 2d_1+d_2<1.
\end{equation}
We decompose the expression of $\mathbf{K}$ as follows
\begin{align}
    \mathbf{K}(x,t) &=\sum_{k=1}^{3}\frac{1}{(2\pi)^n} \int_{D_k}  \mathbf{B}(\xi,t)  e^{\mm{i}x\cdot \xi} \mm{d}\xi =: \sum_{k=1}^{3}\mathbf{K}_k(x,t),
    \label{2602271634}
\end{align}
where we have defined that, for some positive constant $\tilde{c}$,
\begin{align}
\begin{cases}
D_1: = \{\xi\in \mathbb{R}^n~|~|\xi| \leqslant \tilde{c}\varepsilon^{-d_1},\ \tilde{c} |\xi_1| \geqslant \varepsilon^{d_2}\},\\
D_2:= \{\xi \in \mathbb{R}^n~|~|\xi| \leqslant \tilde{c}\varepsilon^{-d_1},\ \tilde{c} |\xi_1| \leqslant \varepsilon^{d_2}\}, \\
D_3:= \{\xi \in \mathbb{R}^n~|~ |\xi| \geqslant \tilde{c} \varepsilon^{-d_1}\}.
\end{cases}
\end{align}
Consequently, we have the following estimates of $\mathbf{K}_j$ with $j\in \{1,2,3\}$.
\begin{lem}\label{2602250005}
Let $t_0$ be a positive constant and $l\in \mathbb{Z}^+$. For sufficiently small positive constant $\tilde{c}$ only depending on $\mu$ and $\nu$, it holds that
\begin{align}
\label{260213001}
&\|\mathbf{K}_k(t)\|_{L^\infty(S_{(l+1) R})} \begin{cases}
\lesssim_{\theta,R,t_0,l}  \varepsilon^{\theta}&\mbox{for }k=1;\\
 \lesssim_{\theta, t_0}  \varepsilon^{\theta}&\mbox{for }k=2; \\
 \lesssim_{\theta, t_0}  \varepsilon^{\theta} &\mbox{for }k=3, 
\end{cases}\\
& \|\mathbf{K}_1(t)\|_{L^\infty} \lesssim_{t_0}  1 \label{2602271201}
\end{align}for any $t>t_0$ and for any $\varepsilon\in (0,1)$.
\end{lem}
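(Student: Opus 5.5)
The plan is to estimate each $\mathbf{K}_k$ directly from its Fourier integral in \eqref{2602271634}, using the pointwise bound \eqref{2602121346} of Lemma \ref{lem:pointwise}. Oscillation is needed only for $\mathbf{K}_1$; $\mathbf{K}_2$ and $\mathbf{K}_3$ follow from size bounds alone.

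\emph{The global bound \eqref{2602271201} and $\mathbf{K}_3$.} For $t>t_0$, \eqref{2602121346} gives $|b_{ij}|\lesssim e^{-\min\{\mu,\nu\}t|\xi|^2}(1+t|\xi|^2)\lesssim_{t_0} e^{-\min\{\mu,\nu\}t_0|\xi|^2/2}$, uniformly in $\varepsilon$. This function is integrable over $\mathbb{R}^n$, which gives \eqref{2602271201} and also a uniform bound for the full $\mathbf{K}$. For $\mathbf{K}_3$ the same majorant is integrated only over $|\xi|\geqslant \tilde c\varepsilon^{-d_1}$, giving a bound $\lesssim_{t_0}\exp(-c\,t_0\varepsilon^{-2d_1})\lesssim_{\theta,t_0}\varepsilon^\theta$, uniformly in $x$.

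\emph{$\mathbf{K}_2$.} On $D_2$ we use $|b_{ij}|\lesssim \sup_{s\geqslant 0}e^{-s}(1+s)\lesssim 1$ and bound the measure of $D_2$ by
\[|D_2|\lesssim \tilde c^{-1}\varepsilon^{d_2}\,(\tilde c\varepsilon^{-d_1})^{n-1}\lesssim \varepsilon^{d_2-d_1(n-1)}=\varepsilon^\theta,\]
by \eqref{2602111142}.

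\emph{$\mathbf{K}_1$, the main obstacle.} On $D_1$ we have $\varepsilon^{-1}|\xi_1|\gtrsim \varepsilon^{d_2-1}$ and $|\mu-\nu||\xi|^2\lesssim\tilde c^2\varepsilon^{-2d_1}$. Since $d_2-1<-2d_1$, for $\tilde c$ small we get $\delta<0$ with $4\varepsilon^{-2}\xi_1^2\gg ((\mu-\nu)|\xi|^2)^2$. Hence $\Lambda/2=\mathrm{i}t\omega$ with
\[\omega=\sqrt{\varepsilon^{-2}\xi_1^2-(\mu-\nu)^2|\xi|^4/4}=\varepsilon^{-1}|\xi_1|\bigl(1+O(\varepsilon^2|\xi|^4\xi_1^{-2})\bigr).\]
The entries then become
\[e^{-(\mu+\nu)t|\xi|^2/2}\Bigl(\cos(t\omega)\mp\tfrac{(\mu-\nu)|\xi|^2}{2}\,\tfrac{\sin(t\omega)}{\omega}\Bigr),\qquad \mathrm{i}\,\varepsilon^{-1}\xi_1\,e^{-(\mu+\nu)t|\xi|^2/2}\,\tfrac{\sin(t\omega)}{\omega}.\]
Writing $\cos$ and $\sin$ as combinations of $e^{\pm \mathrm{i}t\omega}$, each piece of $\mathbf{K}_1$ has the form $\int_{D_1}a_\pm(\xi,t)e^{\mathrm{i}\phi_\pm}\,\mathrm{d}\xi$ with phase $\phi_\pm=x\cdot\xi\pm t\omega(\xi)$. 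Here $\partial_{\xi_1}\omega=(\varepsilon^{-2}-(\mu-\nu)^2|\xi|^2)\xi_1/\omega=\varepsilon^{-1}\mathrm{sgn}(\xi_1)(1+o(1))$. For $|x_1|\leqslant (l+1)R$ and $t>t_0$ this gives $|\partial_{\xi_1}\phi_\pm|\geqslant t\varepsilon^{-1}/2-(l+1)R\gtrsim t\varepsilon^{-1}$ once $\varepsilon\leqslant\varepsilon_0(R,t_0,l)$; for $\varepsilon\in(\varepsilon_0,1)$ the claim already follows from \eqref{2602271201}. This localization to the slab is the only place $R$ and $l$ enter.

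We then integrate by parts once in $\xi_1$, writing $e^{\mathrm{i}\phi}=(\mathrm{i}\partial_{\xi_1}\phi)^{-1}\partial_{\xi_1}e^{\mathrm{i}\phi}$. Two kinds of terms result.
\begin{itemize}
\item \emph{Boundary terms.} These sit on $\{|\xi_1|=\varepsilon^{d_2}/\tilde c\}$ and on the sphere $|\xi|=\tilde c\varepsilon^{-d_1}$. The amplitude is $O(1)$ there, the factor $(\partial_{\xi_1}\phi)^{-1}$ contributes $O(\varepsilon)$, and the transversal measure is $\lesssim\varepsilon^{-d_1(n-1)}$. The total is $\lesssim\varepsilon^{1-d_1(n-1)}\leqslant\varepsilon^\theta$.
\item \emph{The interior term.} It equals $\int_{D_1}\partial_{\xi_1}\bigl(a/\partial_{\xi_1}\phi\bigr)e^{\mathrm{i}\phi}\,\mathrm{d}\xi$. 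Here $\partial_{\xi_1}a$ is controlled by $t|\xi|\,e^{-ct|\xi|^2}$ plus terms from differentiating $\varepsilon^{-1}\xi_1/\omega$ and $|\xi|^2/\omega$, which are $\lesssim \varepsilon^{2}|\xi|^{4}|\xi_1|^{-3}+\varepsilon|\xi|\,|\xi_1|^{-1}$. The term $\partial^2_{\xi_1}\phi=t\partial^2_{\xi_1}\omega$ is similarly $O(\varepsilon|\xi|^4\xi_1^{-3}+|\xi|^2\cdots)$, and dividing by $(\partial_{\xi_1}\phi)^2\sim\varepsilon^{-2}$ makes it small.
\end{itemize}
Integrating these over $D_1$, the worst factors are $|\xi_1|^{-1}$ (giving a $|\log\varepsilon|$) and $|\xi_1|^{-3}\lesssim\varepsilon^{-3d_2}$ multiplied by $\varepsilon^{2}$ and a power of $\varepsilon^{-d_1}$. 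The Gaussian $e^{-ct_0|\xi|^2}$ absorbs every polynomial power of $|\xi|$, so the total is $\lesssim_{t_0}\varepsilon^{1-}\cdot\varepsilon^{-d_1(n-1)}\lesssim\varepsilon^\theta$, using $2d_1+d_2<1$ and $\theta<1-d_1(n-1)$.

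The delicate point is checking that every amplitude derivative is indeed small in $\varepsilon$ after division by $\partial_{\xi_1}\phi$, especially near $|\xi_1|\sim\varepsilon^{d_2}$, where $\omega$ deviates most from $\varepsilon^{-1}|\xi_1|$. If a single integration by parts loses too much there, I would replace the sharp cutoffs by smooth ones adapted to $D_1$, so that boundary terms are replaced by derivatives of the cutoff of size $\varepsilon^{-d_2}$, and integrate by parts a second time.
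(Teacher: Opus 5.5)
Your proposal is correct and is essentially the paper's argument: the same size bounds give \eqref{2602271201}, $\mathbf{K}_2$ and $\mathbf{K}_3$, and $\mathbf{K}_1$ is handled by integrating by parts in $\xi_1$ on $D_1$ against $e^{\pm\Lambda/2}$, using $|\partial_{\xi_1}\Lambda|\sim t\varepsilon^{-1}$ there. The differences are minor: you put $x\cdot\xi$ into the phase, which forces the harmless restriction $\varepsilon\leqslant\varepsilon_0(R,t_0,l)$, and you integrate by parts only once, letting the Gaussian factor make the interior term integrable, whereas the paper iterates $\ell$ times because it pays $\mathrm{meas}\,D_1\lesssim\varepsilon^{-d_1 n}$; one step does suffice, since after division by $\partial_{\xi_1}\phi$ the worst interior terms are $O(\varepsilon^3|\xi|^4|\xi_1|^{-3})$, which by \eqref{2602111304} is $\lesssim\varepsilon|\xi_1|^{-1}$ on $D_1$ and integrates to $O(\varepsilon|\log\varepsilon|)$ against the Gaussian (bounding $|\xi_1|^{-3}$ pointwise by $\varepsilon^{-3d_2}$, as your sketch suggests, would be too lossy when $d_2$ is near $1$), so your fallback to smooth cutoffs is unnecessary.
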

\begin{proof} 
In what follows, we denote the $n$-dimensional measure of a point set $S\subset \mathbb{R}^n$ by $\mm{meas\,} S $ and $C_i^{\ell} :=\ell!/i!(\ell-i)!$, where $i\in \{0,\ldots,\ell\}$ and $\ell\in \mathbb{Z}^+$. Next we estimate for $\mathbf{K}_1$--$\mathbf{K}_3$ in sequence.  

(1) We begin with to estimate for $\mathbf{K}_1$. To this purpose, we always assume $\xi\in D_1$. Recalling the second relation in \eqref{2602111142} and the definition of $D_1$, it holds that
\begin{align}
\label{2602111304}
|\xi|^2 \leqslant \tilde{c}^2 \varepsilon^{-2d_1} \leqslant \tilde{c}^2 \varepsilon^{d_2-1} \leqslant \tilde{c}^3\varepsilon^{-1}|\xi_1| \mbox{ for any }\varepsilon\in (0,1).
\end{align}
Thus, for sufficiently small $\tilde{c}$ only depending on $\mu$ and $\nu$,
\begin{equation*}
   {4\varepsilon^{-2}\xi_1^2 - (\mu-\nu)^2 |\xi|^4} \geqslant 2\varepsilon^{-2}\xi_1^2 .
\end{equation*}
Hence $ \Lambda$
is a purely imaginary number, and 
 \begin{align}
 \label{26021312345}
   \varepsilon^{-1}t  |\xi_1| \lesssim | \Lambda| \lesssim \varepsilon^{-1} t |\xi_1|.
   \end{align}

Notice that
\begin{equation}
\label{2602131525}
 \xi_1\neq 0\mbox{ and } |\xi_1|\leqslant |\xi| \leqslant \tilde{c}^3 \varepsilon^{-1}.
\end{equation}
Exploiting \eqref{26021312345}, \eqref{2602131525} and the formula
\begin{align*}
 \partial_{\xi_1}^{l} \Lambda=(2\Lambda)^{-1}\left(   \partial_{\xi_1}^{\ell}( { (\mu-\nu)^2 |\xi|^4}-4\varepsilon^{-2}\xi_1^2) - \sum_{j=1}^{{\ell}-1}C_j^{\ell} \partial_{\xi_1}^j\Lambda \partial_{\xi_1}^{\ell-j}\Lambda\right) , 
 \end{align*}
 we can estimate that, for sufficiently small $\tilde{c}$ only depending on $\mu$ and $\nu$,
 \begin{equation}
     \varepsilon^{-1}t   \lesssim  |\partial_{\xi_1}  \Lambda| \lesssim \varepsilon^{-1} t  . 
     \label{26021313061}
\end{equation}
and 
\begin{equation}
    |\partial_{\xi_1}^{\ell}  \Lambda| \lesssim_\ell \varepsilon^{-1} t |\xi_1|^{1-{\ell} }  . 
     \label{2602131306}
\end{equation}
Making use of the last inequality in \eqref{2602111304}, \eqref{2602131306} and the formula
$$ \partial_{\xi_1}^{\ell} (\partial_{\xi_1}\Lambda)^{-1}=-(\partial_{\xi_1}\Lambda)^{-1}   \sum_{j=1}^{{\ell}}C_j^{\ell} \partial_{\xi_1}^{j+1}\Lambda \partial_{\xi_1}^{{\ell} -j}(\partial_{\xi_1}\Lambda)^{-1} ,$$  
we easily conclude that
\begin{equation}\label{eq:doverd}
 | \partial_{\xi_1}^{\ell} (\partial_{\xi_1}  \Lambda)^{-1}| \lesssim_\ell (\varepsilon^{-1} t|\xi_1|^{{\ell} })^{-1} \lesssim_{\ell}    t^{-1} \varepsilon^{ 1 - d_2 \ell}.
\end{equation}

Recalling the definitions of $\mathbf{K}_1$ and each entry $b_{ij}$ of $\mathbf{B}$, it is easy to see that  $\mathbf{K}_1$ can be written as a linear combination of
\begin{align} \label{22602271201} 
I_{m}^{\pm} &= \int_{D_1}  e^{\mm{i}x\cdot \xi-(t(\mu+\nu)|\xi|^2 \pm \Lambda )/2}   \chi_m(\xi) \mm{d}\xi\mbox{ for } m\in \{1, 2,3\},
\end{align}
where we have defined that 
\begin{equation*}
   \chi_1:= 1,\ \chi_2:= \frac{t|\xi|^2}{\Lambda}\mbox{ and } \chi_3:= \frac{\varepsilon^{-1}t\xi_1}{\Lambda}\mbox{ with }t>t_0.
\end{equation*}
Making use of \eqref{26021312345}--\eqref{2602131306} and
 the formulas
$$\begin{cases} 
\partial_{\xi_1}^{\ell} \chi_2=\Lambda^{-1}\left( t\partial_{\xi_1}^{\ell}|\xi|^2- \displaystyle \sum_{j=1}^{{\ell}-1}C_j^{\ell} \partial_{\xi_1}^j\Lambda \partial_{\xi_1}^{{\ell} -j}\chi_2\right),\\
\displaystyle \partial_{\xi_1}^{\ell} \chi_3=\Lambda^{-1}\left( \varepsilon^{-1} t\partial_{\xi_1}^{\ell}\xi_1- \sum_{j=1}^{{\ell}-1}C_j^{\ell} \partial_{\xi_1}^j\Lambda \partial_{\xi_1}^{{\ell} -j}\chi_3\right),
\end{cases}$$
 there holds that
\begin{equation}\label{eq:doverd-b}
      {|\partial_{\xi_1}^{\ell-1} \chi_m|}  \lesssim_{\ell} 
    |\xi_1|^{1-{\ell} }\lesssim_{\ell} 
    \varepsilon^{-d_2{(\ell-1)} }\mbox{ for }m\in \{1,2,3\}.
\end{equation}

 Let $\partial D_1$ be the boundary of $D_1$. Noting that $D_1$ is a Lipschitz domain, thus  we have by the integration by parts with respective to $\xi_1$,
\begin{align*}
    I_m^+ &=  \int_{D_1} \frac{\partial_{\xi_1} e^{\Lambda /2}}{\partial_{\xi_1} (\Lambda/2)} e^{\mm{i}x\cdot \xi-t(\mu+\nu)|\xi|^2/2  } \chi_m(\xi)\mm{d}\xi \nonumber \\
    &= -2\int_{D_1}   e^{\Lambda /2} \partial_{\xi_1} \left(\frac{e^{ \mm{i}x\cdot \xi-t(\mu+\nu)|\xi|^2/2  } \chi_m(\xi) }{\partial_{\xi_1}\Lambda}\right) \mm{d}\xi+2\int_{\partial {D_1} } \frac{ e^{\Lambda /2} e^{ \mm{i}x\cdot \xi-t(\mu+\nu)|\xi|^2/2  } \chi_m(\xi) }{\partial_{\xi_1} \Lambda}\vec{n}_1\mm{d}\xi,
\end{align*}
where $\vec{n}_1$ denotes the first component of outer unit vector $\vec{n}$ on the boundary of $D_1$.
We define the two operators $\mathcal{T}_1$ and $\mathcal{T}_2$ as follows
\begin{equation*} 
    \mathcal{T}_1f: = -2\partial_{\xi_1}\left(\frac{f}{\partial_{\xi_1}\Lambda}\right)\mbox{ and } 
\mathcal{T}_2f: = -\frac{2\partial_{\xi_1}{f}}{{\partial_{\xi_1}\Lambda}}.
\end{equation*}
Repeated integration by parts gives
\begin{align*}
    I_m^+ = &\int_{D_1}  e^{\Lambda /2} \mathcal{T}^{\ell} _1\left({e^{\mm{i}x\cdot \xi-t(\mu+\nu)|\xi|^2/2  } \chi_m(\xi)}\right)\mm{d}\xi \nonumber \\
    &+\sum_{j=1}^{\ell}\int_{\partial D_1}  e^{\Lambda /2} \mathcal{T}^{j-1}_2 \left(\frac{2e^{ \mm{i}x\cdot \xi-t(\mu+\nu)|\xi|^2/2  } \chi_m(\xi) }{\partial_{\xi_1} \Lambda}\right) \vec{n}_1 \mm{d}\xi,
\end{align*}
where $\mathcal{T}_2^0f:=f$.

From now on, we assume that  $x\in S_{(l+1) R}$. Noting that 
\begin{align}
\left|\partial_{\xi_1}^\ell e^{\mm{i}x\cdot \xi-t(\mu+\nu)|\xi|^2/2}\right|\lesssim_{R,t_0,l,\ell} 1, 
\label{2602141742}
\end{align}
and then making use of \eqref{eq:doverd}, \eqref{eq:doverd-b} and \eqref{2602141742},  we have the pointwise upper bound 
\begin{equation*}
    \left|\mathcal{T}^{\ell} _1\left({e^{\mm{i}x\cdot \xi -t(\mu+\nu)|\xi|^2/2 } \chi_m(\xi)}\right) \right| \lesssim_{R,t_0,l,{\ell}}\varepsilon^\ell \sum_{j=0}^\ell \varepsilon^{-d_2j} \lesssim_{R,t_0,l,{\ell}}\varepsilon^{(1-d_2){\ell} },  
\end{equation*} 
where $\varepsilon\in (0,1)$.
Similarly, ones also have 
\begin{align*}
    \sum_{j=1}^{\ell}\left|\mathcal{T}^{j-1}_2  \left(\frac{{e^{\mm{i}x\cdot \xi-t(\mu+\nu)|\xi|^2/2  } \chi_m(\xi) }}{\partial_{\xi_1}\Lambda}\right) \right| \lesssim_{R,t_0,l,{\ell}} &\sum_{j=1}^{\ell} \varepsilon^{(1-d_2)(j-1) +1} 
   \lesssim_{R,t_0,l,{\ell}} \varepsilon,
\end{align*}
where $\varepsilon\in (0,1)$.

As a consequence, we get 
\begin{equation*}
    |I^+_m| \lesssim_{R,t_0,l,{\ell}} \varepsilon^{(1-d_2){\ell} } \text{meas\,} D_1+ 
  \varepsilon  \mm{meas}_{n-1} \partial D_1  \lesssim_{R,t_0,l,{\ell}} \varepsilon^{(1-d_2){l} -d_1 n}+\varepsilon^{1 -d_1 (n-1)},
\end{equation*}
where $\mm{meas}_{n-1} D_1$ represents the $(n-1)$-dimensional measure of $D_1$.
Obviously the upper bound of $|I^-_m|$ is also the one of $|I^+_m|$. Therefore we immediately have,
for any given ${\ell} \in \mathbb{Z}^+$,
\begin{equation*} 
    |\mathbf{K}_1| \lesssim_{R,t_0,l,{\ell}} \varepsilon^{(1-d_2){\ell}  -d_1 n} +\varepsilon^{1 -d_1 (n-1)}\mbox{ for any }\varepsilon\in (0,1),
\end{equation*}
which, for some sufficiently large ${\ell} $, yields \eqref{260213001} with $k=1$.

(2) Now we turn to estimating for $\mathbf{K}_2$. Recalling the definition of $D_2$, we have
\begin{equation*}
 |D_2| \lesssim \varepsilon^{d_2-d_1(n-1)}\lesssim \varepsilon^{\theta}.
\end{equation*}
Recalling \eqref{2602121346}, and then using  the fact
\begin{align}
\label{2602271157}
e^{- t \min\{\mu, \nu\}|\xi|^2/2} t|\xi|^2\lesssim 1,
\end{align}
we have
\begin{equation*} \label{mij-R2}
    |b_{ij}(\xi,t)| \lesssim 1.
\end{equation*}
Hence it obviously holds that
\begin{equation*} 
    |\mathbf{K}_2| \lesssim \text{meas\,} D_2 \lesssim \varepsilon^\theta ,
\end{equation*}
which yields \eqref{260213001} with $k=2$.

(3) Now we estimate for $\mathbf{K}_3$. It is easy to check that  
\begin{align*}  e^{- \min\{\mu, \nu\}\tilde{c}^2 t_0\varepsilon^{-2d_1}/2}\lesssim_{\theta,t_0}\varepsilon^{\theta} \mbox{ for any }\varepsilon\in (0,1)
\end{align*}  Exploiting \eqref{2602271157} and the above estimate, we obtain the upper bound
\begin{align*}
    |\mathbf{K}_3| \lesssim &\int_{|\xi| \geqslant \tilde{c} \varepsilon^{-d_1}} e^{- t \min\{\mu, \nu\}|\xi|^2} (t|\xi|^2+1) \mm{d}\xi  \\
    \lesssim &e^{- \min\{\mu, \nu\}\tilde{c}^2t_0 \varepsilon^{-2d_1}/2}\lesssim_{\theta,t_0}\varepsilon^{\theta},
\end{align*}
which yields \eqref{260213001} with $k=3$. 

(4)  Finally, recalling \eqref{22602271201} and \eqref{eq:doverd-b}, we have
$$\|\mathbf{K}(t)\|_{L^\infty}\lesssim \int_{D_1}  e^{ -t(\mu+\nu)|\xi|^2/2}  \mm{d}\xi\lesssim \int_{\mathbb{R}^n}  e^{ -t(\mu+\nu)|\xi|^2/2}  \mm{d}\xi\lesssim_{t_0} 1\mbox{ for any }t>t_0, $$
which yields \eqref{2602271201}.
This completes the proof of Lemma \ref{2602250005}.
\end{proof} 

Now making use of Lebesgue dominated convergence theorem, \eqref{2602250006}, \eqref{2602250007}, \eqref{2602271634} and Lemma \ref{2602250005}, we immediately obtain Theorem \ref{tsafasdhmdsfa1}. 
 
  \appendix
  \section{Formula of solutions for the 1-order linear differential system}
 \renewcommand\thesection{A}
  
  This appendix is devoted to the derivation for the formula of solutions of the 1-order linear differential system.
  \begin{lem}\label{2602121343}
  Let $\ell\in\mathbb{Z}^+$, $\mathbb{I}_\ell$ be a $\ell\times \ell$ unit identity matrix, $a_{ij}\in \mathbb{C} $, $\delta={(a_{11}-a_{22})^2 + 4a_{12}a_{21}}\in \mathbb{R}$, $\digamma_i \in \mathbb{C}^\ell$,
  $$\mathbf{A}
  =\left(
 \begin{matrix}
  a_{11}\mathbb{I}_\ell & a_{12} \mathbb{I}_\ell  \\
  a_{21}  \mathbb{I}_\ell &  a_{22}  \mathbb{I}_\ell
 \end{matrix}\right) 
\mbox{ and } \lambda:=
\begin{cases}
\sqrt{\delta}&\mbox{for }\delta\geqslant 0;\\
\mm{i}\sqrt{-\delta}&\mbox{for }\delta<0. \end{cases}
$$  then the initial value problem
 \begin{align}
 \begin{cases}
 f'=\mathbf{A} f\mbox{ for }t>0;\\
 f|_{t=0}=f^0:=(\digamma_1^\top,\digamma_2^\top)^\top&
 \label{2602121052}
 \end{cases}
 \end{align}
 has the unique solution, which is given by the following formula
\begin{align}
\label{2602121122}
f=e^{(a_{11}+a_{22})t/{2}}\mathbf{B}f^0 ,
\end{align}
where we have defined that 
\begin{align*}&\mathbf{B}=\left(
 \begin{matrix}
 b_{11}\mathbb{I}_\ell & b_{12} \mathbb{I}_\ell  \\
  b_{21}  \mathbb{I}_\ell &  b_{22}  \mathbb{I}_\ell
 \end{matrix}\right) ,\\
 &b_{11} : =    \cosh\left(\frac{\lambda t}{2}\right)
 + \frac{({a_{11}-a_{22}})t}{2}\psi\left(\frac{\lambda t}{2}\right) 
 ,\ 
 b_{12}:
 = a_{12}t  \psi\left(\frac{\lambda t}{2}\right) , \\
& b_{21}:
  =  a_{21}t \psi\left(\frac{\lambda t}{2}\right), \ 
 b_{22}:
  =   
 \cosh\left(\frac{\lambda t}{2}\right)
 + \frac{(a_{22}-a_{11})t}{2}\psi\left(\frac{\lambda t}{2}\right) ,\\
 &\mbox{and } \psi \mbox{ is defined by }\eqref{2602121600}.
\end{align*} 
  \end{lem}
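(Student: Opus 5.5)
The statement is the appendix Lemma \ref{2602121343}: a linear constant-coefficient ODE system whose matrix has a $2\times 2$ block structure with scalar multiples of $\mathbb{I}_\ell$. The plan is to reduce to a scalar $2\times 2$ problem and verify the formula by direct substitution. Uniqueness is immediate: $\mathbf{A}$ is a constant matrix, so \eqref{2602121052} is a linear ODE with Lipschitz right-hand side, and its unique solution is $f=e^{t\mathbf{A}}f^0$. The task therefore reduces to showing $e^{t\mathbf{A}}=e^{(a_{11}+a_{22})t/2}\mathbf{B}$.

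Write $\mathbf{A}=\mathbf{a}\otimes\mathbb{I}_\ell$, where $\mathbf{a}=(a_{ij})$ is the $2\times 2$ scalar matrix. Then $e^{t\mathbf{A}}=e^{t\mathbf{a}}\otimes\mathbb{I}_\ell$, so it suffices to compute $e^{t\mathbf{a}}$. Split $\mathbf{a}=\frac{a_{11}+a_{22}}{2}\mathbb{I}_2+\mathbf{N}$ with
\[
\mathbf{N}=\left(\begin{matrix}(a_{11}-a_{22})/2 & a_{12}\\ a_{21} & (a_{22}-a_{11})/2\end{matrix}\right).
\]
The two summands commute, giving $e^{t\mathbf{a}}=e^{(a_{11}+a_{22})t/2}e^{t\mathbf{N}}$. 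Since $\mathbf{N}$ is traceless, Cayley--Hamilton gives $\mathbf{N}^2=-\det\mathbf{N}\,\mathbb{I}_2=\frac{\delta}{4}\mathbb{I}_2=(\lambda/2)^2\mathbb{I}_2$, where $\delta=(a_{11}-a_{22})^2+4a_{12}a_{21}$ and $\lambda^2=\delta$ in both sign cases of the definition. Summing the even and odd parts of the exponential series separately then yields
\[
e^{t\mathbf{N}}=\cosh(\lambda t/2)\,\mathbb{I}_2+t\,\psi(\lambda t/2)\,\mathbf{N}.
\]
This uses $\sum_k (\lambda t/2)^{2k}/(2k)!=\cosh(\lambda t/2)$ and $\sum_k t^{2k+1}(\lambda/2)^{2k}/(2k+1)!=t\,\psi(\lambda t/2)$. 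Both series are even entire functions of $\lambda$, so the choice of branch for $\lambda$ is irrelevant, and the case $\lambda=0$ is covered by $\psi(0)=1$. Reading off the entries gives exactly $b_{11},b_{12},b_{21},b_{22}$.

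As an alternative check, one can differentiate $f$ directly and use $\frac{d}{dt}\cosh(\lambda t/2)=\frac{t}{2}(\lambda/2)^2\,\psi(\lambda t/2)\cdot 2$ and $\frac{d}{dt}\bigl(t\psi(\lambda t/2)\bigr)=\cosh(\lambda t/2)$, together with $f(0)=f^0$. There is no real obstacle in this argument; the only point needing care is handling $\delta<0$ and $\delta=0$ uniformly, which the entire-function (power-series) form of $\cosh$ and $\psi$ in $\lambda^2$ takes care of.
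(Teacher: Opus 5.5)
Your proposal is correct and follows essentially the same route as the paper: subtract the trace part $\frac{a_{11}+a_{22}}{2}\mathbb{I}$, use that the traceless remainder squares to $\frac{\delta}{4}\mathbb{I}$, and split the exponential series into even and odd parts to obtain $\cosh(\lambda t/2)$ and $t\psi(\lambda t/2)$, with uniqueness from $f=e^{t\mathbf{A}}f^0$. The differences are cosmetic: you reduce to the $2\times 2$ scalar matrix via a Kronecker product, get the square from Cayley--Hamilton, and keep $t$ in the series throughout (noting that $\cosh$ and $\psi$ are even in $\lambda$, so the branch choice does not matter), whereas the paper computes $\mathbf{C}^2$ directly on the block matrix, evaluates at $t=1$, and then rescales $a_{ij}\to a_{ij}t$.
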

\begin{proof}
Let 
\begin{align}
\label{2602271236}
\mathbf{C} = \mathbf{A} - \frac{a_{11}+a_{22}}{2} \mathbb{I}_\ell
 =\left(
 \begin{matrix}
  ({a_{11}-a_{22}}) \mathbb{I}_\ell/{2} & a_{12}  \mathbb{I}_\ell\\
  a_{21}  \mathbb{I}_\ell & ({a_{22}-a_{11}})/{2} \mathbb{I}_\ell
 \end{matrix}\right).
\end{align} 
We can compute out that
\begin{align}
\mathbf{C}^{2} =\delta\mathbb{I}_{2\ell}/4.
\label{2602111605}
\end{align}
Here and in what follows, $\mathbb{I}_{2\ell}$ represents a $2\ell\times 2\ell$  unit identity matrix.
 Exploiting \eqref{2602111605}, we have 
\begin{align}
 e^{\mathbf{C}}& = \sum_{k =0}^\infty \frac{\mathbf{C}^k}{k!}=\sum_{k =0}^{\infty}\frac{1}{(2k)!}\left(\frac{\delta}{4}\right)^{k}\mathbb{I}_{2\ell}+\sum_{k =0}^{\infty}\frac{1}{(2k+1)!}\left(\frac{\delta}{4}\right)^{k}\mathbf{C}\nonumber\\
 & =\cosh \left(\frac{\lambda}{2}\right)\mathbb{I}_{2\ell }
 + \psi \left(\frac{\lambda}{2}\right) \mathbf{C}=\mathbf{B}|_{t=1}.
 \label{2602121123}
\end{align}
Utilizing \eqref{2602271236} and \eqref{2602121123}, we obtain
\begin{align}
\label{2602121053}
e^{\mathbf{A}}  =e^{({a_{11}+a_{22}})/{2}} e^{\mathbf{C}} = 
e^{(a_{11}+a_{22})/{2}}\mathbf{B}|_{t=1} 
\end{align}
Thus replacing $a_{ij}$ by $a_{ij}t$ in \eqref{2602121053} yields 
\begin{align}
\label{26021210d53}
e^{\mathbf{A}t}  = 
e^{(a_{11}+a_{22})t/{2}}\mathbf{B} .
\end{align} 
We mention that we have used the properties of the power series of matrices, which can be found in Section 18 in Chpater IV in \cite{WWODE148}, for the derivation of \eqref{2602121123} and \eqref{2602121053}. 

It is well-know that 
$f=e^{\mathbf{A} t}f^0$ is the unique solution of \eqref{2602121052} (see Part VI in Section 18 in \cite{WWODE148}). Thanks to the formula \eqref{26021210d53},  $f$ given by the formula \eqref{2602121122} is the unique solution of \eqref{2602121052}. This proof is complete.  
\end{proof}
\vspace{4mm}
\noindent\textbf{Acknowledgements.} The research of Fei Jiang was partially supported by NSFC (Grant Nos. 12525109, 12371233 and 12231016) and the NSF of Fujian Province of China (Grant  Nos. 2024J011011 and 2022J01105), the research of Xiao Ren by NSFC (No. 62588101) and the National Key R$\&$D Program of China (No. 2023YFA1010700), and the research of Yi Zhou by NSFC (No. 12171097).
	
\renewcommand\refname{References}
\renewenvironment{thebibliography}[1]{%
\section*{\refname}
\list{{\arabic{enumi}}}{\def\makelabel##1{\hss{##1}}\topsep=0mm
\parsep=0mm
\partopsep=0mm\itemsep=0mm
\labelsep=1ex\itemindent=0mm
\settowidth\labelwidth{\small[#1]}
\leftmargin\labelwidth \advance\leftmargin\labelsep
\advance\leftmargin -\itemindent
\usecounter{enumi}}\small
\def\newblock{\ }
\sloppy\clubpenalty4000\widowpenalty4000
\sfcode`\.=1000\relax}{\endlist}
\bibliographystyle{model1b-num-names}

\end{CJK*}
\end{document}